\newtheorem{thm}{Theorem}[section]
\newtheorem{prop}[thm]{Proposition}
\newtheorem{lem}[thm]{Lemma}
\newtheorem{cor}[thm]{Corollary}
\newtheorem{rem}[thm]{Remark}
\newtheorem{rems}[thm]{Remarks}
\newtheorem{defi}[thm]{Definition}
\newtheorem{exo}{\bf\large Exercice}
\newcommand{\R}{\mathbb{R}}
\newcommand{\C}{\mathbb{C}}
\newcommand{\ds}{\displaystyle}
\newcommand{\beq}{\begin{eqnarray}}
\newcommand{\eeq}{\end{eqnarray}}
\newcommand{\bq}{\begin{equation}}
\newcommand{\eq}{\end{equation}}
\newcommand{\beqn}{\begin{eqnarray*}}
\newcommand{\eeqn}{\end{eqnarray*}}
\newcommand{\bex}{\begin{exo}}
\newcommand{\eex}{\end{exo}}
\newcommand{\ben}{\begin{enumerate}}
\newcommand{\een}{\end{enumerate}}
\newcommand{\Sum}{\displaystyle \sum}
\newcommand{\Int}{\displaystyle \int}
\newcommand{\Frac}{\displaystyle \frac}
\newcommand{\Sup}{\displaystyle \sup}
\newcommand{\Lim}{\displaystyle \lim}
\newcommand{\Liminf}{\displaystyle \liminf}
\newcommand{\Limsup}{\displaystyle \limsup}
\newcommand{\Max}{\displaystyle \max}
\def\virgp{\raise 2pt\hbox{,}}
\let\wt=\widetilde
\author{Mohamed Khalil Zghal}
\address{Universit\'e de Tunis El Manar, Facult\'e des Sciences de Tunis, Laboratoire \'Equations aux d\'eriv\'ees partielles (LR03ES04), 2092 Tunis, Tunisie}
\email{\sl zghal-khalil@hotmail.fr}
\title[Sharp Adams-type inequality invoking Hardy inequalities]{Sharp Adams-type inequality invoking Hardy inequalities}
\begin{document}
\begin{abstract}
We establish a sharp Adams-type inequality invoking a Hardy inequality for any even dimension. This leads to a non compact Sobolev embedding in some Orlicz space. We also give a description of the lack of compactness of this embedding in the spirit of \cite{Bahouri}.
\end{abstract}

\maketitle
\section{Introduction}
\subsection{Setting of the problem}
The Trudinger-Moser type inequalities have a long history beginning with the works of Pohozaev \cite{Poho} and Trudinger \cite{Trudinger}. Letting $\Omega\subset \R^n$ be a bounded domain with $n\geq 2$, the authors looked in these pioneering works for the maximal growth function $g:\R\rightarrow \R_+$ such that
$$ \sup_{u\in W_0^{1,n}(\Omega),\,\|\nabla u\|_{L^n}\leq 1}\Int_\Omega g(u)\,dx<+\infty\,,$$
 and they proved independently that the maximal growth is of exponential type. Thereafter, Moser improved these works by founding a sharp result known under the name of Trudinger-Moser inequality (see \cite{Moser}) and since that time, this subject has continued to interest researchers and Trudinger-Moser inequality has been extended in various directions (one can mention \cite{AT,Adams,Masmoudi,Ruf,Sani}) generating several applications. Among the results obtained concerning Trudinger-Moser type inequalities, we recall the so-called Adams' inequality in $\R^{2N}$.
 \begin{prop}\cite{Lam, Sani}\label{Sobolev}
 There exists a finite constant $\kappa>0$ such that
\begin{equation}\label{2}
\sup_{u\in H^N(\R^{2N}),\,\|u\|_{H^N(\R^{2N})}\leq
1}\;\;\int_{\R^{2N}}\,\left({\rm e}^{\beta_N
|u(x)|^2}-1\right)\;dx:=\kappa\,,
\end{equation}
where $\beta_N=N!\, \pi^N2^{2N}$, and for any $\beta>\beta_N$
 \begin{equation}\label{3}
\sup_{u\in H^N(\R^{2N}),\|u\|_{H^N(\R^{2N})}\leq
1}\;\;\int_{\R^{2N}}\,\left({\rm e}^{\beta
|u(x)|^2}-1\right)\;dx=+\infty\,.
\end{equation}
\end{prop}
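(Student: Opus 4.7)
The plan is to prove the two parts of Proposition \ref{Sobolev}: the boundedness (\ref{2}) at the critical exponent $\beta_N = N!\,\pi^N 2^{2N}$ and the sharpness (\ref{3}) for $\beta > \beta_N$. I would follow the Ruf--Sani and Lam--Lu strategy that extends the classical bounded-domain Adams inequality to the whole space $\R^{2N}$ by replacing the Riesz potential with the Bessel potential.

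Step 1 (representation). Given $u \in H^N(\R^{2N})$ with $\|u\|_{H^N} \le 1$, write $u = G_N * f$, where $f := (I-\Delta)^{N/2} u$ satisfies $\|f\|_{L^2} \le 1$ and $G_N$ is the Bessel kernel of order $N$ on $\R^{2N}$, characterized by $\widehat{G_N}(\xi) = (1+|\xi|^2)^{-N/2}$. The kernel $G_N$ enjoys two crucial features: a logarithmic singularity of the form $G_N(x) \sim C_N \log(1/|x|)$ near the origin, where the constant $C_N$ is tuned so that $\beta_N$ emerges as the critical threshold, together with exponential decay at infinity.

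Step 2 (rearrangement and reduction). By Schwarz symmetrization I may assume $u$ and $f$ are radial and nonincreasing; the Bessel-type norm does not increase under this rearrangement. O'Neil's inequality then produces a pointwise bound on the decreasing rearrangement $u^*$ in terms of $f^{**}$ (the maximal function of $f^*$), essentially of the form
\[u^*(t) \le C_N \int_t^{\infty} f^{**}(s)\,\frac{ds}{s} + \text{lower order terms},\]
valid for $0 < t < 1$. The substitution $t = |B_1|\, e^{-s}$ converts this into a one-dimensional exponential estimate, to which the classical Adams lemma applies and produces the uniform bound on the integral over a fixed ball. The exterior region $\{|x| > R\}$ is controlled using the exponential decay of $G_N$ combined with Cauchy--Schwarz, which turns $e^{\beta_N u^2} - 1$ into an $L^p$-like quantity and yields (\ref{2}).

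Step 3 (sharpness). I would test the supremum against a concentrating Moser sequence: choose a radial $u_k$ equal to a constant $\alpha_k$ on $B(0,1/k)$, vanishing outside $B(0,1)$, and interpolated by a polynomial-logarithmic profile on the annulus, with $\alpha_k$ adjusted so that $\|u_k\|_{H^N} = 1$. A direct computation (analogous to Moser's) gives $\alpha_k^2 = \frac{2N}{\beta_N}\log k + O(1)$, whence for any fixed $\beta > \beta_N$,
\[\Int_{\R^{2N}}\bigl(e^{\beta u_k^2(x)} - 1\bigr)\,dx \ge c\,k^{2N(\beta/\beta_N - 1)} \longrightarrow +\infty.\]
The main obstacle lies in the rearrangement step: the Bessel potential does not behave as simply under symmetric decreasing rearrangement as the Riesz potential, so one has to either pass through a refined O'Neil argument or truncate and invoke the bounded-domain Adams inequality as a black box, while carefully controlling the far field. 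Tracking the precise constant $\beta_N$ through the asymptotic of $G_N$ at the origin, and verifying $\|u_k\|_{H^N} = 1 + o(1)$ for the sharpness sequence, are the delicate quantitative points.
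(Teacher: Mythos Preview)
The paper does not itself prove Proposition~\ref{Sobolev}; it is quoted from \cite{Lam,Sani}, with only the remark that the argument is ``treated firstly in the radial case and generalized then by symmetrization arguments'' and rests on Adams' bounded-domain inequality (Proposition~\ref{bounded d}). So there is no detailed proof here to compare against. That said, the paper's own Theorem~\ref{thm1} is proved by exactly the Ruf--Sani scheme the remark alludes to, and it is worth contrasting with your sketch: one splits $\int(e^{\beta_N u^2}-1)$ into a ball $B(r_0)$ and its exterior; on the exterior one expands the exponential as a power series and controls each term via the radial pointwise decay $|u(x)|\lesssim\|u\|_{H^1}|x|^{-(N-1/2)}$; on the ball one subtracts the boundary value $u(r_0)$ to land in $H^N_0(B(r_0))$ with $\|\nabla^N\cdot\|_{L^2}\le 1$, and then invokes Proposition~\ref{bounded d} as a black box. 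No Bessel kernel, no O'Neil.

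Your route---Bessel representation plus O'Neil plus the one-dimensional Adams lemma---is the Lam--Lu variant and is a legitimate alternative, but a few points in your write-up are off. First, the symmetrization is not Schwarz rearrangement of $u$: that can \emph{increase} higher-order norms such as $\|\Delta u\|_{L^2}$. In the Bessel framework one rearranges $f=(I-\Delta)^{N/2}u$ (which preserves $\|f\|_{L^2}$) and then uses Riesz/O'Neil; in the Ruf--Sani framework one instead uses an iterated comparison principle for polyharmonic equations to produce a radial majorant $u^\#$ with no larger $H^N$ norm. Second, the exterior is not handled by the exponential decay of $G_N$; it is the radial Sobolev decay of $u$ itself (as above) that does the job. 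Third, the norm in the statement is $\|u\|_{L^2}^2+\sum_{j}\|\nabla^j u\|_{L^2}^2$, not $\|(I-\Delta)^{N/2}u\|_{L^2}^2$; the two agree at top order so the critical exponent $\beta_N$ is the same, but you should say explicitly that only $\|\nabla^N u\|_{L^2}\le 1$ is used on the ball, which either norm guarantees. Your sharpness construction in Step~3 is the standard Moser-type sequence and matches what the paper uses for the sharpness part of Theorem~\ref{thm1}.
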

\begin{rems}\quad\
\begin{itemize}
\item In the above proposition, the norm $\|\,.\,\|_{H^N}$ designates the following Sobolev norm
$$\|u\|_{H^N(\R^{2N})}^2:=\|u\|_{L^2(\R^{2N})}^2+\Sum_{j=1}^N\|\nabla^ju\|_{L^2(\R^{2N})}^2\,,$$
where $\nabla^ju$ denotes the j-th order gradient of $u$, namely
\begin{equation*}
 \nabla^ju=\left\{%
\begin{array}{ll}
\Delta^\frac{j}{2}u&\hbox{if $j$ is even},\\
\nabla\Delta^\frac{j-1}{2}u&\hbox{if $j$ is odd}.
\end{array}%
\right.
\end{equation*}
\medbreak
\item The proof of Proposition \ref{Sobolev}, treated firstly in the radial case and generalized then by symmetrization arguments, is based on the following Trudinger-Moser inequality in a bounded domain.
 \begin{prop}[\cite{Adams}, Theorem 1]\label{bounded d}
Let $\Omega$ be a bounded domain in $\R^{2N}$. There exists a positive constant $C_N$ such that
 \begin{equation*}
 \Sup_{u\in H_0^N(\Omega),\,\|\nabla^N u\|_{L^2}\leq 1}\Int_\Omega {\rm e}^{\beta_N|u(x)|^2}\;dx\leq C_N|\Omega|\,,
 \end{equation*}
where $|\Omega|$ denotes the Lebesgue measure of $\Omega$. Furthermore, this inequality is sharp.
 \end{prop}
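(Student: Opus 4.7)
The plan is to follow Adams' original strategy from \cite{Adams}: use a Riesz-potential representation of $u$ in terms of $\nabla^N u$, apply O'Neil's rearrangement inequality for convolutions with weak-$L^p$ kernels to reduce to a one-dimensional problem, and then establish the key exponential integral bound on the half-line (Adams' lemma). A separate construction of a Moser-type extremizing sequence will handle sharpness.

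First I would set up the representation formula. Since $\Omega$ is bounded and $u \in H_0^N(\Omega)$, one can extend $u$ by zero to all of $\R^{2N}$. When $N$ is even, the Riesz composition formula gives
\begin{equation*}
u(x) = \frac{1}{\gamma_N}\int_\Omega \frac{\Delta^{N/2}u(y)}{|x-y|^N}\,dy,
\end{equation*}
for an explicit constant $\gamma_N$, because $1/|x|^N$ is (up to normalization) the kernel of $(-\Delta)^{-N/2}$ on $\R^{2N}$. When $N$ is odd the same representation holds with $\Delta^{N/2}u$ replaced by $\nabla\Delta^{(N-1)/2}u$ and $|x-y|^N$ replaced by a vector-valued kernel of the same homogeneity. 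The crucial point is that the kernel $k(z)=|z|^{-N}$ lies in weak-$L^{2}(\R^{2N})$, with norm precisely tuned to produce the constant $\beta_N=N!\,\pi^N 2^{2N}$.

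Next I would invoke O'Neil's inequality for convolutions with weak-$L^p$ kernels to control the decreasing rearrangement $u^{*}$ in terms of $(\nabla^N u)^{*}$. After the change of variables $t=\log(|\Omega|/s)$ and $\phi(t)=s^{1/2}(\nabla^N u)^{*}(s)$, the constraint $\|\nabla^N u\|_{L^2}\le 1$ becomes $\int_0^\infty \phi(t)^2\,dt\le 1$, and the quantity $\beta_N |u(x)|^2$ gets bounded by an expression of the form $\bigl(\int_0^\infty a(s,t)\phi(s)\,ds\bigr)^{2}$ with a kernel $a$ satisfying $a(s,t)\le s^{1/2}$ and $\int_t^\infty a(s,t)^2\,ds\le t+O(1)$. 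This puts us exactly in the setting of \emph{Adams' lemma}: under these structural hypotheses on $a$ one has
\begin{equation*}
\int_0^\infty \exp\!\Bigl(\Bigl(\int_0^\infty a(s,t)\phi(s)\,ds\Bigr)^{2}-t\Bigr)\,dt \le C_N,
\end{equation*}
which, translated back through the change of variables, gives the desired bound by $C_N|\Omega|$. I expect the main obstacle to be the verification of Adams' one-dimensional lemma: its proof requires a delicate level-set decomposition of $\phi$ combined with the dual Hardy-type inequality, and the constant $\beta_N$ appears precisely because it is the reciprocal of the weak-$L^2$ norm of the Riesz kernel.

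Finally, for sharpness, I would follow Moser's idea and exhibit an explicit family of radial functions concentrating at a point $x_0\in\Omega$. For $\varepsilon>0$ small, set $u_\varepsilon(x)=c_N\,\omega_{2N}^{-1/2}\,\psi_\varepsilon(|x-x_0|)$ where $\psi_\varepsilon$ is piecewise polynomial, equal to $(\log(1/\varepsilon))^{1/2}$ on $|x-x_0|\le\varepsilon$, interpolated suitably on $\varepsilon<|x-x_0|<r_0$, and zero outside. A direct computation shows $\|\nabla^N u_\varepsilon\|_{L^2}=1+o(1)$ while for any $\beta>\beta_N$ the integral $\int_\Omega \exp(\beta|u_\varepsilon|^2)\,dx$ blows up as $\varepsilon\to 0$, proving that $\beta_N$ cannot be improved.
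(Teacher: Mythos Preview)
The paper does not prove this proposition at all; it is quoted as Theorem~1 of \cite{Adams} and used only as a black box (in fact, only its two-dimensional radial case is invoked, at the end of the proof of Proposition~\ref{ball d}). Your sketch is a faithful outline of Adams' original argument---Riesz-potential representation, O'Neil's rearrangement inequality, reduction to the one-dimensional exponential lemma, and a Moser-type family for sharpness---so there is nothing in the paper to compare it against. One cosmetic remark: you use the symbol $\gamma_N$ for the Riesz-kernel normalization constant, but in this paper $\gamma_N$ already denotes $4\pi^N N/(N-1)!$; choose a different letter to avoid a clash.
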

 \medbreak
 \item As emphasized above, Proposition \ref{Sobolev} has been at the origin of numerous applications. Among others, one can mention the description of the lack of compactness of Sobolev embedding involving Orlicz spaces in \cite{ Bahouri,  Bahouri1', BP, IK, IK1}, the analysis of some elliptic and biharmonic equations in \cite{Sani1,Sani2,Sani3} and the study of global wellposedness and the asymptotic completeness for evolution equations with exponential nonlinearity in dimension two in \cite{Bahouri new1, Bahouri new2, BIP, Bahouri,Colliander,Ibrahim1,Ibrahim2}.
 \end{itemize}
\end{rems}
Sobolev embedding inferred by Proposition \ref{Sobolev} states as follows:
\begin{equation}\label{Orlicz}
H^N(\R^{2N})\hookrightarrow \mathcal{L}(\R^{2N})\,,
\end{equation}
where $\mathcal{L}$ is the so-called Orlicz space associated to the function $\phi(s):={\rm e}^{s^2}-1$ and defined as follows (for a complete presentation and more details, we refer the reader to \cite{Rao} and the references therein):
\begin{defi}
We say that a measurable function $u : \R^d\to\C$ belongs to
$\mathcal{L}(\R^d)$ if there exists $\lambda>0$ such that
$$
\Int_{\R^d}\left( {\rm e}^{\frac{|u(x)|^2}{\lambda^2}}-1\right)\;dx<\infty\,.
$$
We denote then
\begin{equation}\label{1}
\|u\|_{\mathcal{L}(\R^d)}=\inf\,\left\{\,\lambda>0,\Int_{\R^d}\left( {\rm e}^{\frac{|u(x)|^2}{\lambda^2}}-1\right)\;dx\leq
1\,\right\}\,.
\end{equation}
\end{defi}
\begin{rems}\quad\
\begin{itemize}
\item It is easy to check that $\|\,.\,\|_\mathcal{L}$ is a norm on the $\C$-vector space $\mathcal{L}$ which is invariant under translations and oscillations.
     \medbreak
     \item One can also verify that the number $1$ in \eqref{1} may be replaced by any positive constant. This changes the norm $\|\,.\,\|_{\mathcal{L}}$ to an equivalent one.
         \medbreak
 \item In the sequel, we shall endow the   space ${\mathcal{L}(\R^{2N})}$  with the
norm $\|\cdot\|_{\mathcal{L}(\R^{2N})}$ where the number $1$ is replaced by
the constant $\kappa$ involved in Identity~\eqref{2}. The Sobolev embedding \eqref{Orlicz} states then as follows:
\begin{equation}\label{inje}
    \|u\|_{\mathcal{L}(\R^{2N})}\leq\frac{1}{\sqrt{\beta_N}}\|u\|_{H^N(\R^{2N})}\,\virgp
\end{equation}
where the Sobolev constant $\ds \frac{1}{\sqrt{\beta_N}}$ is sharp.
\medbreak
\item
 Denoting by $L^{\phi_p}$ the Orlicz space associated to $\phi_p(s):={\rm{e}^{s^2}}-\Sum_{k=0}^{p-1} \frac{s^{2k}}{k!}\virgp$ with $p$ an integer larger than $1$, we deduce from Proposition \ref{Sobolev} the more general Sobolev imbedding
\begin{equation}\label{fi p}
H^N(\R^{2N})\hookrightarrow L^{\phi_p}(\R^{2N})\,.
\end{equation}
\medbreak
\item  Let us finally observe that $\mathcal{L} \hookrightarrow L^p$ for every $2 \leq p <\infty$\,.
    \end{itemize}
\end{rems}

In this article, our goal is twofold. Firstly  obtain an analogue of Proposition~\ref{Sobolev} in the radial framework of a  functional space $\mathcal{H}(\R^{2N})$ closely related to Hardy inequalities, which will easily lead to the following Sobolev imbedding \begin{equation}\label{embed}
  \mathcal{H}_{rad}(\R^{2N})\hookrightarrow\mathcal{L}(\R^{2N})\,.
  \end{equation}
  Secondly describe the lack of compactness of \eqref{embed}, which could be at the origin of several applications as it has been the case by previous characterizations of defect of compactness of various Sobolev embeddings.

\medbreak \noindent More precisely, for any integer $N\geq 2$, the space we will consider in this paper is defined
as follows:
\begin{equation}\label{def1}
\mathcal{H}(\R^{2N}):=\left\{u\in H^{1}(\R^{2N}); \frac{\nabla u}{|\,.\,|^{N-1}}\in L^2(\R^{2N})\right\}\,\cdot
\end{equation}
In view of the well-known Hardy inequalities (see for instance  \cite{BCG, BC,Hardy1, Hardy2}):
\begin{equation}\label{Hardy}
\left\|\frac{ u}{|\,.\,|^s}\right\|_{L^2(\R^d)}\leq C_{d,s}\| u\|_{\dot{H}^s(\R^d)}\,,\quad \forall\,s\in\Big[0,\frac{d}{2}\Big[\,\virgp
\end{equation}
the Sobolev space $H^N(\R^{2N})$ continuously embeds in the functional space $\mathcal{H}(\R^{2N})$ endowed with the norm
$$\|u\|_{\mathcal{H}(\R^{2N})}^2=\|u\|_{H^{1}(\R^{2N})}^2+\left\|\frac{\nabla u}{|\,.\,|^{N-1}}\right\|_{L^2(\R^{2N})}^2\,\cdot$$
 Actually, as shown by
the example of
function $$x\longmapsto \log(1-\log|x| )\, \mathbf{1}_{B_1(0)}(x)\,,$$
with $B_1(0)$  the unit ball of $\R^{2N}$, the embedding of $H^N(\R^{2N})$  into  $\mathcal{H}(\R^{2N})$ is strict for every $N\geq 2$.
\medbreak \noindent For the convenience of the reader, the following diagram recapitulates the different embeddings including the spaces involved in this work.
\begin{equation*}
\xymatrix{}
      \xymatrix{
  H^N(\R^{2N})  \ar@{^{(}->}[r] \ar@{^{(}->}[d]\ar@{^{(}->}[rd] & \mathcal{L}(\R^{2N}) \\
     H^1(\R^{2N})\ar@{<-^{)}}[r]\ar@{^{(}->}[ru]^*[@]{\hbox to 0pt{\hss\txt{N=1}\hss}} & \mathcal{H}(\R^{2N})\ar@{^{(}->} [u]_{\mbox{radial case}}
  }
  \end{equation*}

\medbreak \noindent The interest we take to the space $\mathcal{H}$ is motivated  by the importance of Hardy inequalities in Analysis (among others, we can mention blow-up methods or the study of pseudo-differential operators with singular coefficients).

\subsection{Main results}
 The result we obtained concerning the sharp Adams-type inequality in the framework of the space $\mathcal{H}(\R^{2N})$ takes the following form:
\begin{thm}\label{thm1}
For any integer $N$ greater than $2$, there exists a finite constant $\kappa '>0$ such that
\begin{equation}\label{inequality d1}
\Sup_{u\in\mathcal{H}_{rad}(\R^{2N}),\,\|u\|_{\mathcal{H}(\R^{2N})}\leq 1}\Int_{\R^{2N}}\Big({\rm e}^{\gamma_N|u(x)|^2}-1\Big)\; dx:=\kappa '\,,
\end{equation}
 where $\gamma_N:=\Frac{4\pi^NN}{(N-1)!}\,\virgp$ and for any $\gamma>\gamma_N$
 \begin{equation}\label{sharpness d1}
\Sup_{u\in\mathcal{H}_{rad}(\R^{2N}),\,\|u\|_{\mathcal{H}(\R^{2N})}\leq 1}\Int_{\R^{2N}}\Big({\rm e}^{\gamma|u(x)|^2}-1\Big)\; dx=+\infty\,.
\end{equation}
\end{thm}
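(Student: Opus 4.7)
The plan is a one-dimensional reduction via the logarithmic substitution $r=e^{-t}$, followed by a Moser-type estimate on the line, together with a concentrating sequence for the sharpness. For radial $u\in\mathcal{H}(\R^{2N})$ with $\|u\|_{\mathcal{H}}\leq 1$, set $v(t):=u(e^{-t})$. Writing the three ingredients of $\|u\|_{\mathcal{H}}^2$ in polar coordinates and applying this substitution yields
$$\omega_{2N-1}\int_{\R}|v'(t)|^2\,dt=\left\|\tfrac{\nabla u}{|\cdot|^{N-1}}\right\|_{L^2(\R^{2N})}^2\leq 1,\qquad \omega_{2N-1}\int_{\R}|v(t)|^2 e^{-2Nt}\,dt=\|u\|_{L^2(\R^{2N})}^2\leq 1,$$
where $\omega_{2N-1}=\tfrac{2\pi^N}{(N-1)!}$, so that $\gamma_N=2N\omega_{2N-1}$. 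The radial Strauss lemma applied to $u\in H^1_{rad}$ gives $u(r)\to 0$ as $r\to\infty$, hence $v(-\infty)=0$. Since
$$\int_{\R^{2N}}(e^{\gamma_N u^2}-1)\,dx=\omega_{2N-1}\int_{\R}(e^{\gamma_N v^2}-1)e^{-2Nt}\,dt,$$
Theorem \ref{thm1} reduces to a uniform bound on this right-hand side.

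The uniform bound on the one-dimensional weighted integral is obtained by a level-set/distribution-function argument. For $\lambda>0$, on $\{v^2\leq\lambda\}$ the elementary bound $e^{s}-1\leq s\,e^{\gamma_N\lambda}$ (valid for $0\leq s\leq\gamma_N\lambda$) combined with the weighted $L^2$ control $\int v^2 e^{-2Nt}\,dt\leq\omega_{2N-1}^{-1}$ yields an integrable majorant of constant order. On the complementary set $\{v^2>\lambda\}$, one uses the sharp pointwise Moser-type inequality
$$v(t)^2\leq (t-t_0)\int_{t_0}^{+\infty}|v'(s)|^2\,ds + \text{lower-order corrections},$$
where $t_0:=\sup\{t:v(t)^2\leq\lambda\}$, together with the fact that $\int_{\R}|v'|^2\leq \omega_{2N-1}^{-1}=2N/\gamma_N$, to bound $\gamma_N v(t)^2-2Nt$ from above, so that $e^{\gamma_N v^2}e^{-2Nt}$ is controlled by an integrable function. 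The main obstacle is precisely the absence of a Dirichlet condition at $t_0$: the correction $v(t_0)^2$ has to be absorbed using the weighted $L^2$ information, and $\lambda$ (together with an auxiliary $\varepsilon$-parameter in Young's inequality) must be chosen adaptively so that the critical constant is saturated without loss, which is what forces the sharp threshold $\gamma_N=2N\omega_{2N-1}$.

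To prove the sharpness statement \eqref{sharpness d1} for $\gamma>\gamma_N$, I would exhibit the Moser concentrating sequence
$$u_k(r)=c_k\cdot\begin{cases}\log(1/\delta_k), & 0\leq r\leq\delta_k,\\ \log(1/r), & \delta_k\leq r\leq 1,\\ 0, & r\geq 1,\end{cases}$$
(smoothed if necessary) with $\delta_k\to 0$ and $c_k^2=(\omega_{2N-1}\log(1/\delta_k))^{-1}$. A direct computation gives $\bigl\|\nabla u_k/|\cdot|^{N-1}\bigr\|_{L^2(\R^{2N})}^2=1$ while $\|u_k\|_{L^2}^2+\|\nabla u_k\|_{L^2}^2=o(1)$ for $N\geq 2$, so that $\|u_k\|_{\mathcal{H}}\leq 1$ after a harmless renormalization. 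On the core $\{|x|<\delta_k\}$ one has $u_k^2=\log(1/\delta_k)/\omega_{2N-1}$ and therefore
$$\int_{\R^{2N}}(e^{\gamma u_k^2}-1)\,dx\gtrsim\delta_k^{\,2N-\gamma/\omega_{2N-1}}\longrightarrow+\infty$$
as $k\to\infty$, since the exponent $2N-\gamma/\omega_{2N-1}$ is negative whenever $\gamma>2N\omega_{2N-1}=\gamma_N$.
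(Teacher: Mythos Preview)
Your sharpness argument via the concentrating sequence is essentially the paper's (your $\delta_k$ is the paper's $e^{-k}$), and the logarithmic substitution $r=e^{-t}$ is the right frame. However, the argument for the uniform bound \eqref{inequality d1} has two genuine gaps.

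First, the level-set split is awkward for non-monotone $v$: since $u$ is merely radial, not radially decreasing, the set $\{v^2>\lambda\}$ need not be a half-line $(t_0,\infty)$, so the Cauchy--Schwarz bound $v(t)^2\lesssim (t-t_0)\|v'\|_{L^2}^2+v(t_0)^2$ does not cover all of it. The paper sidesteps this by splitting the \emph{domain} at a fixed radius $r_0$ rather than by level.

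Second---and this is the real issue---you correctly flag the boundary term $v(t_0)^2$ as ``the main obstacle'' but do not actually absorb it. Your scheme (Moser's pointwise bound plus an $\varepsilon$-Young inequality) is the Adachi--Tanaka argument, which is inherently \emph{subcritical}: after Young you need $\gamma_N(1+\varepsilon)\|v'\|_{L^2}^2<2N$, and this gives no uniform bound as $\|v'\|_{L^2}^2\to\omega_{2N-1}^{-1}$. The phrase ``$\lambda$ chosen adaptively'' does not close the gap, and the weighted $L^2$ norm of $v$ alone cannot control $v(t_0)$ pointwise. The paper's remedy (following Ruf) is explicit: fix $r_0$ independent of $u$, set $w=\bigl(u-u(r_0)\bigr)\sqrt{1+u(r_0)^2}$ on $B(r_0)$, and combine the Strauss bound $u(r_0)^2\leq C\|u\|_{H^1}^2/r_0^{2N-1}$ with the budget $\bigl\|\nabla u/|x|^{N-1}\bigr\|_{L^2}^2\leq 1-\|u\|_{H^1}^2$ to obtain
\[
\Bigl\|\frac{\nabla w}{|\cdot|^{N-1}}\Bigr\|_{L^2(B(r_0))}^2\leq\Bigl(1+\frac{C\|u\|_{H^1}^2}{r_0^{2N-1}}\Bigr)\bigl(1-\|u\|_{H^1}^2\bigr)\leq 1
\]
for $r_0$ large. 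This is the mechanism by which the $H^1$ piece of the norm simultaneously bounds the boundary value \emph{and} supplies the slack in the weighted Dirichlet energy needed to reach $\gamma_N$. The estimate on $B(r_0)$ is then reduced, via the substitution $s=r^N$, to the classical two-dimensional Moser inequality on a disk (Proposition~\ref{ball d}), rather than proved directly in one dimension.
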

\begin{rems}\quad\
\begin{itemize}
\item Note that the optimal constant involved  in Identity \eqref{inequality d1} is different from that appearing in Identity \eqref{2}.
 \medbreak
 \item Usually, the proofs of Trudinger-Moser inequalities reduce to the radial framework
under symmetrization arguments. In particular, in dimension two this question
is achieved by means of Schwarz symmetrization (see [1]). The
key point in that process is the preservation of Lebesgue norms and the minimization
of energy.\medbreak\noindent Unfortunately, the quantities $\left\|\frac{\nabla u}{|\,.\,|^{N-1}}\right\|_{L^2(\R^{2N})}$ cannot be minimized under Schwarz symmetrization as shown by the example $u_k(x):=\varphi(|x|+k)$, where $\varphi \neq 0$ is a smooth compactly supported function. The fact that $u_k^*=\varphi$ shows that the control of $\left\|\frac{\nabla {u_k^*}}{|\,.\,|^{N-1}}\right\|_{L^2(\R^{2N})}$ by $\left\|\frac{\nabla u_k}{|\,.\,|^{N-1}}\right\|_{L^2(\R^{2N})}$ fails.
\medbreak
\item It is clear that, when the constant $1$ in \eqref{1} is replaced by $\kappa '$, Theorem \ref{thm1} implies the following radial continuous embedding
$$
\|u\|_{\mathcal{L}(\R^{2N})} \leq \frac{1}{\sqrt{\gamma_N}}\|u\|_{\mathcal{H}_{rad}(\R^{2N})}\,\virgp
$$
where
 the Sobolev constant $\ds \frac{1}{\sqrt{\gamma_N}}$ is optimal.

 \medbreak
\item Observe that due to the continuous embedding
 $$H^N(\R^{2N})\hookrightarrow\mathcal{H}(\R^{2N})\,,$$
 Theorem \ref{thm1}   can be viewed as a generalization of Proposition \ref{Sobolev} in the radial framework.
\end{itemize}
\end{rems}

As mentioned above,  our second aim in this paper is to describe the lack of compactness of the Sobolev embedding   \eqref{embed}. Actually, this  embedding  is non compact at least for two reasons. The first reason is a lack of compactness
at infinity,  as shown by the  example  $u_k(x)=\varphi(x+x_k)$ where
$0\neq\varphi\in{\mathcal D}$ and $|x_k|\to\infty$, which converges weakly  to $0$ in $\mathcal{H}(\R^{2N})$ and satisfies $\|u_k\|_{\mathcal{L}(\R^{2N})}=\|\varphi\|_{\mathcal{L}(\R^{2N})}$. The second
reason is of concentration-type  as illustrated by the following example derived by P.-L. Lions \cite{Lions1,Lions2}:
    \begin{equation}\label{fk}
 f_{k}(x)=\left\{%
\begin{array}{ll}
0&\hbox{if\quad $|x|\geq 1$}\,,\\\\
-\sqrt{\frac{2N}{k  \, \gamma_N }}\log|x| &\hbox{if\quad ${\rm{e}}^{-k}\leq |x|< 1$}\,,\\\\
\sqrt{\frac{2Nk}{\gamma_N}} & \hbox{if \quad$|x|< {\rm{e}}^{-k}$}\,.
\end{array}%
\right.
\end{equation}
Indeed, we have the following proposition the proof of which is postponed to Section~\ref{Appendix} for the convenience of the reader.
\begin{prop}
\label{f_alpha}
The sequence  $(f_k)_{k \geq 0}$ defined above converges weakly  to $0$ in $\mathcal{H}(\R^{2N})$ and satisfies
$$ \|f_k\|_{\mathcal{L}(\R^{2N})} \stackrel{k\to\infty}\longrightarrow\Frac{1}{\sqrt{\gamma_N}}\,\cdot $$

\end{prop}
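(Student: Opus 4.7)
The plan is to verify three properties in sequence: the boundedness of $(f_k)$ in $\mathcal{H}(\R^{2N})$, its weak convergence to $0$, and the asymptotic value of the Orlicz norm $\|f_k\|_{\mathcal L}$.

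First, I would compute the three ingredients of $\|f_k\|_{\mathcal{H}}^2$ directly from the explicit form of $\nabla f_k$: on the annulus $\{e^{-k}<|x|<1\}$ one has $\nabla f_k(x)=-\sqrt{2N/(k\gamma_N)}\,x/|x|^{2}$, and $\nabla f_k\equiv 0$ elsewhere. Writing $\omega_{2N-1}=\frac{2\pi^N}{(N-1)!}$ and passing to spherical coordinates, the key identity
\[
\left\|\frac{\nabla f_k}{|\cdot|^{N-1}}\right\|_{L^2(\R^{2N})}^2=\frac{2N}{k\gamma_N}\,\omega_{2N-1}\!\int_{e^{-k}}^{1}\frac{dr}{r}=\frac{2N\,\omega_{2N-1}}{\gamma_N}=1
\]
follows from the very definition of $\gamma_N$. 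The remaining pieces $\|f_k\|_{L^2}^2$ and $\|\nabla f_k\|_{L^2}^2$ are both seen to be $O(1/k)$ via routine radial integration, hence $\|f_k\|_{\mathcal{H}}\to 1$ and in particular $(f_k)$ is bounded.

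For the weak convergence, since $\mathcal{H}$ is a Hilbert space it suffices (once boundedness is in hand) to check $(f_k,\varphi)_{\mathcal{H}}\to 0$ for $\varphi$ in a dense subspace, such as $\mathcal{D}(\R^{2N})$. The $L^2$- and $\dot H^1$-pairings vanish by Cauchy--Schwarz using $\|f_k\|_{H^1}=O(k^{-1/2})$. The weighted pairing admits the explicit bound
\[
\left|\int_{\R^{2N}}\frac{\nabla f_k\cdot\nabla\varphi}{|x|^{2N-2}}\,dx\right|\le\sqrt{\tfrac{2N}{k\gamma_N}}\,\|\nabla\varphi\|_\infty\,\omega_{2N-1}\!\int_{e^{-k}}^{1}dr,
\]
which is $O(k^{-1/2})$, so each piece of the inner product vanishes.

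The heart of the argument is the Orlicz norm analysis. Splitting the integral into the inner disc $\{|x|<e^{-k}\}$ and the annulus, and then performing the change of variable $|x|=e^{-t}$ in the annular part, one finds
\[
\int_{\R^{2N}}\!\bigl(e^{|f_k|^{2}/\lambda^{2}}-1\bigr)\,dx=\tfrac{\omega_{2N-1}}{2N}e^{-2Nk}\bigl(e^{\frac{2Nk}{\gamma_N\lambda^{2}}}-1\bigr)+\omega_{2N-1}\!\int_0^k\!\bigl(e^{\frac{2N}{k\gamma_N\lambda^{2}}t^{2}}-1\bigr)e^{-2Nt}\,dt.
\]
When $\gamma_N\lambda^{2}>1$, the first summand behaves like $e^{2Nk(1/(\gamma_N\lambda^{2})-1)}\to 0$, while the second vanishes by dominated convergence based on the bound $\tfrac{2N}{k\gamma_N\lambda^{2}}t^{2}\le\tfrac{2N}{\gamma_N\lambda^{2}}t$ valid for $t\in[0,k]$. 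Since $\kappa>0$, this forces $\|f_k\|_{\mathcal L}\le\lambda$ for large $k$, whence $\limsup_k\|f_k\|_{\mathcal L}\le 1/\sqrt{\gamma_N}$. When $\gamma_N\lambda^{2}<1$ the first summand already diverges to $+\infty$, giving $\|f_k\|_{\mathcal L}\ge\lambda$ for large $k$. Sending $\lambda\to 1/\sqrt{\gamma_N}$ from either side yields the claimed limit. The main technical difficulty is precisely this last analysis; happily, the critical exponent $\gamma_N\lambda^{2}=1$ is only approached and never tested, which spares one a sharp asymptotic of the integral at the borderline.
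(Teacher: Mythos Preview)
Your argument is correct, and for the Orlicz norm it takes a genuinely different route from the paper. The paper obtains the lower bound $\liminf_k\|f_k\|_{\mathcal L}\ge 1/\sqrt{\gamma_N}$ essentially as you do (restricting to the inner ball $|x|<e^{-k}$), but for the upper bound it invokes a general subcritical inequality of Adachi--Tanaka type (Proposition~\ref{Adachi}): for every $\gamma<\gamma_N$ there is $C_{\gamma,N}$ with
\[
\int_{\R^{2N}}\bigl(e^{\gamma|u|^2}-1\bigr)\,dx\le C_{\gamma,N}\,\|u\|_{L^2}^2
\]
for decreasing radial $u$ with $\|\,|x|^{1-N}\nabla u\|_{L^2}\le 1$, and then combines this with $\|f_k\|_{L^2}\to 0$. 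By contrast, you bypass that general lemma entirely: you compute the Orlicz integral of $f_k$ explicitly, bound the annular part pointwise via $t^2/k\le t$ on $[0,k]$, and let dominated convergence do the work. Your approach is shorter and fully self-contained for this particular sequence; the paper's approach has the payoff of producing Proposition~\ref{Adachi}, a result of independent interest which also certifies the sharpness of the exponent $\gamma_N$ there.

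Two small remarks. First, the threshold constant in the Orlicz norm relevant here is $\kappa'$ from Theorem~\ref{thm1} rather than $\kappa$; your argument is insensitive to this since you drive the integral to $0$ or $+\infty$. Second, your weak-convergence step quietly uses that $\mathcal D(\R^{2N})$ is dense in $\mathcal H(\R^{2N})$; if you prefer to avoid that point, note that $\|f_k\|_{H^1}\to 0$ handles the $H^1$-pairings directly, and for the weighted pairing it suffices to show $|x|^{1-N}\nabla f_k\rightharpoonup 0$ in $L^2(\R^{2N};\R^{2N})$, which follows from your same estimate tested against the dense class $C_c$.
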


    \medbreak
\noindent It will be useful later on to emphasize  that  $f_k$ can be recast under the following form:
 \begin{equation}\label{decomp}
 f_k(x)=\sqrt{\frac{2Nk}{\gamma_N}}\,{\mathbf L}\Big(-\frac{\log|x|}{k}\Big)\,\virgp
 \end{equation}
where \begin{equation*}
 {\mathbf L}(t)=\left\{%
\begin{array}{ll}
1&\hbox{if\quad $t\geq 1$}\,,\\
t &\hbox{if\quad $0\leq t<1$}\,,\\
0& \hbox{if \quad $t<0$}\,,
\end{array}%
\right.
\end{equation*}
and that
\begin{equation}\label{formula}\|f_k\|_{H^1(\R^{2N})}\stackrel{k\rightarrow\infty}\longrightarrow 0\quad\mbox{and}\quad \Big\|\frac{\nabla f_k}{|\,.\,|^{N-1}}\Big\|_{L^2(\R^{2N})}= \big\|{\mathbf L}'\big\|_{L^2(\R)}= 1\,.\end{equation}
   In order to state our second result in a clear way,  let us introduce some objects as in \cite{Bahouri}.
    \begin{defi} \label{object}
    We shall designate by a scale any sequence $\underline{\alpha}:=(\alpha_n)_{n \geq 0}$
of positive real numbers going to infinity and by a profile any function $\psi$ belonging to the set
$$
\mathcal {P}:=\Big\{\;\psi\in L^2(\R,{\rm e}^{-2Ns}ds);\quad \psi'\in
L^2(\R),\;\psi_{|]-\infty,0]}=0\,\Big\}\,.
$$
Two scales $\underline{\alpha}$, $\underline{\beta}$ are said orthogonal if
    $$
   \Big|\log\left(\frac{\beta_n}{\alpha_n}\right)\Big|\stackrel{n\to\infty}\longrightarrow \infty\,.
    $$
    \end{defi}

 \begin{rem}  Recall  that each profile $\psi \in \mathcal {P}$ belongs to the H\"older space $C^{\frac 1 2} (\R)$, and satisfies
\begin{equation}\label{behavpsi}
\frac{\psi(s)}{\sqrt{s}} \to 0 \quad\mbox{as}\quad s \to 0 \,.
\end{equation}  Indeed taking advantage of the fact that  $\psi' \in L^2(\R)$, we get for any $s_2 >  s_1$ $$ \Big|\psi(s_2)- \psi(s_1)\Big| = \Big|\int_{s_1}^{s_2}\,\psi'(\tau)\,d\tau\Big| \leq \sqrt{s_2- s_1}\left(\int_{s_1}^{s_2}\,\,\psi'^2(\tau)\,d\tau\right)^{1/2}  \,,$$
which ensures that $\psi \in C^{\frac 1 2} (\R)$ and implies \eqref{behavpsi} by taking $s_1= 0$.
\end{rem}

The result we establish in this paper highlights the fact that the lack of compactness of the Sobolev embedding    \eqref{embed} can be described in terms of generalizations of the example by Moser \eqref{fk} as follows:
\begin{thm}\label{thm2}
Let $(u_n)_{n \geq 0}$ be a bounded sequence in $\mathcal{H}_{rad}(\R^{2N})$ such that
\begin{equation} \label{H1}
u_n\rightharpoonup 0\,,
\end{equation}
 \begin{equation} \label{H2}
\limsup_{n\to\infty}\|u_n\|_{\mathcal{L}(\R^{2N})}=A_0 >0\,,\quad\mbox{and}
\end{equation}
\begin{equation} \label{H3}
\Lim_{R\rightarrow\infty}\Limsup_{n\rightarrow\infty}\Int_{|x|\geq R}|u_n(x)|^2\,dx=0\,.
\end{equation}
 Then, there
exist a sequence of pairwise orthogonal scales $(\underline{\alpha}^{(j)})_{j\geq 1}$ and a sequence of profiles $(\psi^{(j)})_{j\geq 1}$ such that up to a subsequence extraction, we have for all
$\ell\geq 1$,\begin{equation}\label{decomposition}
u_n(x)=\Sum_{j=1}^{\ell}\,\sqrt{\frac{2N\alpha_n^{(j)}}{\gamma_N}}\;\psi^{(j)}\left(\frac{-\log|x|}{\alpha_n^{(j)}}\right)+{\rm
r}_n^{(\ell)}(x)\,,
\end{equation}
with $\Limsup_{n\to\infty}\;\|{\rm
r}_n^{(\ell)}\|_{\mathcal{L}(\R^{2N})}\stackrel{\ell\to\infty}\longrightarrow 0\,.$ Moreover, we have the following stability estimate
\begin{equation*}
\Big\|\frac{\nabla u_n}{|\,.\,|^{N-1}}\Big\|_{L^2(\R^{2N})}^2=\Sum_{j=1}^\ell\big\|\psi^{(j)'}\big\|_{L^2(\R)}^2+\Big\|\frac{\nabla {\rm r}_n^{(\ell)}}{|\,.\,|^{N-1}}\Big\|_{L^2(\R^{2N})}^2+\circ(1)\,,\quad n\rightarrow \infty\,.
\end{equation*}
\end{thm}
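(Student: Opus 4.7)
\emph{Reduction to a one-dimensional problem.} My plan is to adapt the profile-decomposition strategy of Bahouri--Majdoub--Masmoudi \cite{Bahouri} to the present Hardy-type framework, with $\|\nabla u/|\cdot|^{N-1}\|_{L^2(\R^{2N})}$ playing the role of the top-order Sobolev seminorm and $\gamma_N$ the role of the sharp Adams constant. Radial symmetry together with the change of variables $s=-\log|x|$ turns the Hardy-type seminorm into a plain Dirichlet norm on $\R$: setting $\tilde u(s):=u(e^{-s})$, an integration in polar coordinates yields
$$
\Big\|\frac{\nabla u}{|\cdot|^{N-1}}\Big\|_{L^2(\R^{2N})}^{2}=\frac{\gamma_N}{2N}\,\|\tilde u\,'\|_{L^2(\R)}^{2},
$$
so that the ansatz $u(x)=\sqrt{2N\alpha/\gamma_N}\,\psi(-\log|x|/\alpha)$ is exactly adapted to the problem, in agreement with \eqref{formula}. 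Hypothesis \eqref{H3} together with the compact embedding $H^1_{loc}\hookrightarrow L^2_{loc}$ localizes the lack of compactness at $|x|=0$, i.e.\ at $s\to+\infty$ for $\tilde u_n$.

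\emph{Extraction of the first profile.} Guided by \eqref{fk}--\eqref{decomp}, I would measure the concentration of the sequence by
$$
A(\underline\alpha):=\limsup_{n\to\infty}\sqrt{\frac{\gamma_N}{2N\alpha_n}}\,|\tilde u_n(\alpha_n)|,
$$
taken over admissible scales $\underline\alpha=(\alpha_n)\to+\infty$; the embedding \eqref{embed} combined with hypothesis \eqref{H2} forces $\sup_{\underline\alpha}A(\underline\alpha)\geq cA_0>0$. For a near-maximizing scale $\underline\alpha^{(1)}$, the rescaled functions $\psi_n^{(1)}(y):=\sqrt{\gamma_N/(2N\alpha_n^{(1)})}\,\tilde u_n(\alpha_n^{(1)} y)$ satisfy $\|(\psi_n^{(1)})'\|_{L^2(\R)}=\|\nabla u_n/|\cdot|^{N-1}\|_{L^2(\R^{2N})}$, and hence admit a weak $\dot H^1(\R)$-limit $\psi^{(1)}$ along a subsequence. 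Its membership in $\mathcal{P}$ follows from the weighted-$L^2$ control inherited from $\|u_n\|_{L^2(\R^{2N})}^{2}=|\mathbb{S}^{2N-1}|\,\|\tilde u_n\|_{L^2(\R,\,e^{-2Ns}ds)}^{2}$ together with \eqref{H3}, which forces $\psi^{(1)}$ to vanish on $(-\infty,0]$.

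\emph{Iteration, orthogonality and energy stability.} I then set
$$
r_n^{(1)}(x):=u_n(x)-\sqrt{\frac{2N\alpha_n^{(1)}}{\gamma_N}}\,\psi^{(1)}\!\Big(\frac{-\log|x|}{\alpha_n^{(1)}}\Big)
$$
and iterate the procedure on $r_n^{(1)}$. The weak convergence $\psi_n^{(1)}\rightharpoonup\psi^{(1)}$ in $\dot H^1(\R)$ annihilates the cross term, yielding the single-step stability identity and, by induction, the announced finite-$\ell$ identity. Pairwise orthogonality of the selected scales in the sense of Definition~\ref{object} is established by contradiction: if $|\log(\alpha_n^{(j+1)}/\alpha_n^{(j)})|$ stayed bounded, the two corresponding profiles could be recombined into a single one, contradicting the near-maximality of the extraction at step $j$.

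\emph{Main obstacle: the $\mathcal{L}$-smallness of the remainder.} The delicate point is to show $\limsup_n\|r_n^{(\ell)}\|_{\mathcal{L}}\to 0$ as $\ell\to\infty$. Summing the stability identity and using the boundedness of $\|u_n\|_{\mathcal{H}}$ yields $\sum_j\|\psi^{(j)'}\|_{L^2}^{2}<\infty$, so the concentration level of $r_n^{(\ell)}$ tends to zero as $\ell\to\infty$. Converting this information back into $\mathcal{L}$-smallness is where the sharpness of $\gamma_N$ in Theorem~\ref{thm1} becomes decisive: one needs a quantitative subcritical refinement asserting that if $\|w\|_{\mathcal{H}}\leq M$ and the concentration level of $w$ is at most $\varepsilon$, then $\|w\|_{\mathcal{L}}\leq C(M)\,\varepsilon^{\theta}$ for some $\theta>0$. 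Obtaining this refinement, plausibly by Taylor-expanding the exponential in \eqref{inequality d1} and absorbing the tail against some fixed $\gamma<\gamma_N$, is the step I expect to require the most care.
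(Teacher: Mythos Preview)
Your overall strategy matches the paper's: the same logarithmic change of variables $s=-\log|x|$, the same rescaling $\psi_n(y)=\sqrt{\gamma_N/(2N\alpha_n)}\,\tilde u_n(\alpha_n y)$, extraction of the weak $\dot H^1(\R)$-limit as the profile, and iteration with the energy identity coming from weak convergence.

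However, you have misidentified where the difficulty lies. The step you flag as the ``main obstacle''---converting small concentration level of $r_n^{(\ell)}$ into small $\mathcal L$-norm via a quantitative subcritical refinement---is unnecessary, and is in fact just the contrapositive of something you already claimed. Your assertion that ``the embedding combined with \eqref{H2} forces $\sup_{\underline\alpha}A(\underline\alpha)\geq cA_0$'' is precisely the content of the paper's key Lemma~\ref{heart} (via Proposition~\ref{step1} and Corollary~\ref{alpha}): from $\limsup_n\|u_n\|_{\mathcal L}=A_0>0$ one finds a scale $(\alpha_n^{(1)})$ with $|v_n(\alpha_n^{(1)})|\geq\tfrac{A_0}{2}\sqrt{(2N-1)\,\alpha_n^{(1)}}$, and then the profile obeys
\[
\big\|{\psi^{(1)}}'\big\|_{L^2(\R)}\;\geq\;\big|\psi^{(1)}(1)\big|\;\geq\;\frac{A_0}{2}\sqrt{\frac{(2N-1)\,\gamma_N}{2N}}\,.
\]
The point is that this lower bound is already in terms of the \emph{Orlicz} limsup $A_0$, not an auxiliary concentration quantity. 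Applying it at step $\ell$ to $r_n^{(\ell)}$ gives $\|{\psi^{(\ell+1)}}'\|_{L^2}\geq c\,A_\ell$ directly, and then the stability estimate forces $\sum_\ell A_\ell^2<\infty$, hence $A_\ell\to 0$, with no further analytic input. Your proposed refinement would simply be reproving Lemma~\ref{heart} in contrapositive form.

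What actually deserves care is the step you pass over in one line: that a large Orlicz norm forces large pointwise values of $v_n$ at some scale. The paper (following \cite{Bahouri}) argues that if $\sup_{s\geq 0}\big(|v_n(s)/(A_0-\delta)|^2-(2N-1)s\big)$ stayed bounded, then one could control $\int_{\R^{2N}}(e^{|u_n/\lambda|^2}-1)\,dx$ uniformly for some $\lambda<A_0$, contradicting \eqref{H2}; this uses the strong $L^2$-convergence to zero coming from \eqref{H1}, \eqref{H3} and Rellich. That is the genuine heart of the argument. Also, your orthogonality sketch (``recombine into a single profile, contradicting near-maximality'') is vaguer than what the paper does: it uses the pointwise convergence $\psi_n(s)\to\psi^{(1)}(s)$ (which follows from $\psi_n'\rightharpoonup{\psi^{(1)}}'$ together with $\psi_n(0)\to 0$) to show that if $\alpha_n^{(2)}/\alpha_n^{(1)}$ stayed bounded then $\tilde r_n^{(1)}(\alpha_n^{(2)})=o\big(\sqrt{\alpha_n^{(2)}}\big)$, contradicting the lower bound \eqref{extr} extracted at the second step.
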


\begin{rems}\quad\
\begin{itemize}
\item The hypothesis of compactness at infinity \eqref{H3} is crucial: it allows to avoid the loss of Orlicz norm at infinity.
 \medbreak
 \item Note that the elementary concentrations
 \begin{eqnarray}\label{gnj}
g_n^{(j)}(x):=\sqrt{\frac{2N\alpha_n^{(j)}}{\gamma_N}}\;\psi^{(j)}\left(\frac{-\log|x|}{\alpha_n^{(j)}}\right)\,,\end{eqnarray} involved in Decomposition  \eqref{decomposition}  are in  $\mathcal{H}_{rad}(\R^{2N})$ whereas a priori, they do  not belong to $H^N(\R^{2N})$.
\medbreak
 \item Actually, the lack of compactness  of $H^N(\R^{2N})\hookrightarrow {\mathcal L}(\R^{2N})$ was characterized in \cite{BP} by means of the following type of elementary concentrations:
 \begin{equation}  \label{el} f_n(x):=  \frac {C_{N}} {  \sqrt{ \alpha_n}} \int_{|\xi| \geq {\rm
1}}\frac{ {\rm e}^{ i \, (x-x_n) \cdot \xi}}{|\xi|^{ 2 N }}  \; {\varphi} \Big(\frac { \log |\xi|} {\alpha_n}\Big) \, d \xi\,,\end{equation}
with $(\alpha_n)_{n \geq 0}$ a scale in the sense of Definition \ref{object}, $(x_n)_{n \geq 0}$  a sequence of points in $\R^{2N}$ and  $ {\varphi}$ a function in $L^2(\R_+)$.
Note that (see Proposition 1.7 in  \cite{BP})
$$ f_n(x)= \wt C_N \,\sqrt{\alpha_n}\;\psi\Big(\frac{-\log|x|}{\alpha_n}\Big)\,+ {\rm t}_n(x)\,,$$
  with $\ds \psi(y)= \int^y_0  {\varphi}(t)  \, dt$ and $\|
{\rm t}_n\|_{\mathcal L(\R^{2N})} \stackrel{n\to\infty}\longrightarrow0$.
\medbreak
\item Arguing as in \cite{Bahouri}, we have the following result:
\begin{prop}
Let us consider
\begin{equation*}
g_n(x) := \sqrt{\frac{2N\alpha_n}{\gamma_N}}\;\psi\left(\frac{-\log|x |}{\alpha_n}\right)\,\virgp
\end{equation*}
with $\psi$ a profile and $(\alpha_n)_{n \geq 0}$ a scale.
Then
\begin{equation}\label{gn} \|g_n\|_{\mathcal{L}(\R^{2N})}\stackrel{n\rightarrow\infty}\longrightarrow
 \frac{1}{\sqrt{\gamma_N}}\,\max_{s>0}\;\frac{|\psi(s)|}{\sqrt{s}}\,\cdot
\end{equation}
\end{prop}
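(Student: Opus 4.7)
The plan is to evaluate $I_n(\lambda) := \int_{\R^{2N}}(e^{|g_n(x)|^2/\lambda^2}-1)\,dx$ and locate the threshold value of $\lambda$ above (resp.\ below) which $I_n(\lambda)$ stays below (resp.\ exceeds) the fixed constant $\kappa$ defining $\|\cdot\|_{\mathcal{L}(\R^{2N})}$. Passing to radial coordinates and substituting $s=-\log|x|/\alpha_n$ (so $|x|=e^{-\alpha_n s}$ and $r^{2N-1}\,dr=-\alpha_n e^{-2N\alpha_n s}\,ds$), one obtains
$$I_n(\lambda)\;=\;\omega_{2N-1}\,\alpha_n\int_0^\infty\Bigl(e^{\frac{2N\alpha_n\psi(s)^2}{\lambda^2\gamma_N}}-1\Bigr)\,e^{-2N\alpha_n s}\,ds,$$
the integration starting at $0$ because $\psi$ vanishes on $(-\infty,0]$. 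Setting $M:=\sup_{s>0}\psi(s)^2/s$, the Cauchy--Schwarz bound applied to $\psi(s)=\int_0^s\psi'$ gives $M\leq\|\psi'\|_{L^2(\R)}^2<\infty$; the target value is then $\sqrt{M/\gamma_N}$, and it suffices to prove \textbf{(a)} $I_n(\lambda)\to 0$ whenever $\lambda^2>M/\gamma_N$, and \textbf{(b)} $I_n(\lambda)\to+\infty$ whenever $\lambda^2<M/\gamma_N$.

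For \textbf{(a)}, the rescaling $t=2N\alpha_n s$ turns the prefactor into $\omega_{2N-1}/(2N)$ and the exponent into $\frac{t}{\lambda^2\gamma_N}\,h\bigl(\tfrac{t}{2N\alpha_n}\bigr)$, with $h(\sigma):=\psi(\sigma)^2/\sigma$. Writing $\delta:=M/(\lambda^2\gamma_N)<1$, the bound $h\leq M$ furnishes the integrable majorant $e^{-(1-\delta)t}-e^{-t}$, while \eqref{behavpsi} forces $h(t/(2N\alpha_n))\to 0$ pointwise. Lebesgue dominated convergence then yields $I_n(\lambda)\to 0$, so $\|g_n\|_{\mathcal{L}(\R^{2N})}\leq\lambda$ eventually for every $\lambda>\sqrt{M/\gamma_N}$.

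For \textbf{(b)}, given $\lambda^2<M/\gamma_N$, one picks $s_0>0$ with $\psi(s_0)^2/s_0>\lambda^2\gamma_N$; continuity of $\psi^2$ then provides $\eta,c>0$ such that $\psi(s)^2/(\lambda^2\gamma_N)-s\geq c$ on $(s_0-\eta,s_0+\eta)\subset(0,\infty)$. Restricting the $s$-integral to this window produces
$$I_n(\lambda)\;\geq\;2\eta\,\omega_{2N-1}\,\alpha_n\,(e^{2N\alpha_n c}-1)\;\longrightarrow\;+\infty,$$
so $\|g_n\|_{\mathcal{L}(\R^{2N})}>\lambda$ eventually. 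Combining \textbf{(a)} and \textbf{(b)} yields \eqref{gn}.

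The crux is the dominated-convergence argument in \textbf{(a)}: one needs simultaneously the uniform bound $h\leq M$ (whose finiteness is the small but essential input coming from $\psi'\in L^2$) and the boundary behavior $h(\sigma)\to 0$ as $\sigma\downarrow 0$, the latter being exactly the content of \eqref{behavpsi}. Once these two features of the profile class $\mathcal{P}$ are in hand, the rest amounts to routine changes of variable. Note that $M$ is used only as a supremum, so a genuine maximizer need not exist for \eqref{gn} to hold; when one does exist the $\sup$ coincides with the $\max$ written in the statement.
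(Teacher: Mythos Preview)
Your proof is correct and follows essentially the same route as the paper: pass to radial coordinates, substitute $s=-\log|x|/\alpha_n$, and use the two key features of profiles---the bound $\psi(s)^2/s\le M=\|\psi'\|_{L^2}^2$ and the boundary behavior \eqref{behavpsi}---to control the upper bound, while continuity near a point where $\psi(s)^2/s$ is close to its supremum handles the lower bound. The one organizational difference is that for part~\textbf{(a)} you perform the further rescaling $t=2N\alpha_n s$ and invoke dominated convergence directly, whereas the paper keeps the variable $s$ and splits the integral at a small $\eta>0$, estimating the piece near the origin via \eqref{behavpsi} and the tail via the uniform bound $\psi(s)^2/s\le M$; your packaging is a touch cleaner but the substance is identical.
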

\begin{proof}
Setting
$L=\Liminf_{n\rightarrow\infty}\|g_n\|_{\mathcal{L}(\R^{2N})}\,,$ we have for any fixed $\varepsilon>0$ and any $n$ sufficiently large (up to a subsequence extraction)
$$\Int_{\R^{2N}}\Big({\rm e}^{\big|\frac{g_n(x)}{L+\varepsilon}\big|^2}-1\Big)\,dx\leq \kappa'\,.$$
Therefore, there exists a positive constant $C$ such that
$$\alpha_n\Int_0^{+\infty}{\rm e}^{2N\alpha_ns\Big[\frac{1}{\gamma_N(L+\varepsilon)^2} \big|\frac{\psi(s)}{\sqrt{s}}\big|^2 -1\Big]}\,ds\leq C\,.$$
Using the fact that $\psi$ is a continuous function, we deduce that
$$L+\varepsilon\geq\Frac{1}{\sqrt{\gamma_N}}\Max_{s>0}\Frac{|\psi(s)|}{\sqrt{s}}\,\virgp$$
which ensures that
$$L\geq\frac{1}{\sqrt{\gamma_N}}\Max_{s>0}\Frac{|\psi(s)|}{\sqrt{s}}\,\cdot$$
\medbreak
\noindent To end the proof of \eqref{gn}, it suffices to show that for any positive real number $\delta$, the following estimate holds
$$\Int_{\R^{2N}}\Big({\rm e}^{\big|\frac{g_n(x)}{\lambda}\big|^2}-1\Big)\,dx\stackrel{n\rightarrow\infty}\longrightarrow 0\,,$$
where $\lambda:=\Frac{1+\delta}{\sqrt{\gamma_N}}\Max_{s>0}\frac{|\psi(s)|}{\sqrt{s}}\,\cdot$

\medbreak \noindent Performing the change of variable $r= {\rm e}^{-\alpha_ns}$, we easily get
\begin{eqnarray}\label{int}
\Int_{\R^{2N}}\Big({\rm e}^{\big|\frac{g_n(x)}{\lambda}\big|^2}-1\Big)\,dx&=&\Frac{2\pi^N\alpha_n}{(N-1)!}\Int_0^\infty {\rm e}^{-2N\alpha_ns\big(1-\frac{1}{\gamma_N  \lambda^2}{\big|\frac{\psi(s)}{\sqrt{s}}\big|}^2\big)}\,ds\\
&-&\Frac{2\pi^N\alpha_n}{(N-1)!}\Int_0^\infty {\rm e}^{-2N\alpha_ns}\,ds\,.\nonumber
\end{eqnarray}
Recalling that
$$\frac{\psi(s)}{\sqrt{s}}\rightarrow 0\quad\mbox{as}\quad s\rightarrow0\,, $$
we infer that for any $\varepsilon>0$, there exists $\eta>0$ such that
$$\frac{1}{\gamma_N \lambda^2}{\Big|\frac{\psi(s)}{\sqrt{s}}\Big|}^2<\varepsilon \quad\mbox{for any}\quad 0\leq s<\eta\,.$$
According to \eqref{int}, this gives rise to
\begin{eqnarray*}
\quad\Frac{2\pi^N\alpha_n}{(N-1)!}\Int_0^\eta {\rm e}^{-2N\alpha_ns\big(1-\frac{1}{\gamma_N\lambda^2}{\big|\frac{\psi(s)}{\sqrt{s}}\big|}^2\big)}\,ds&-&\Frac{2\pi^N\alpha_n}{(N-1)!}\Int_0^\eta {\rm e}^{-2N\alpha_ns}\,ds\\
&\leq& \frac{\pi^N\varepsilon}{N!(1-\varepsilon)}+\circ(1)\,,\quad n\rightarrow \infty\,,\end{eqnarray*}
which ensures the desired result.
\end{proof}
\medbreak
\item
Arguing as in Proposition 1.18 in \cite{Bahouri}, we get \begin{equation}\label{sup}
\Big\|\Sum_{j=1}^{\ell}\,g_n^{(j)}\Big\|_{\mathcal{L}(\R^{2N})}\stackrel{n\rightarrow\infty}\longrightarrow \sup_{1\leq
j\leq\ell}\,\left(\lim_{n\to\infty}\,\big\|g_n^{(j)}\big\|_{\mathcal{L}(\R^{2N})}\right)\,,
\end{equation}
where $g_n^{(j)}$ is defined by \eqref{gnj}.

\end{itemize}
\end{rems}
\subsection{Layout} The paper is organized as follows: Section 2 is devoted to the proof of the sharp Adams-type inequality in the framework of the space $\mathcal{H}_{rad}(\R^{2N})$, namely Theorem \ref{thm1}. In Section 3, we establish Theorem \ref{thm2} by describing the algorithm construction of the decomposition of a bounded sequence $(u_n)_{n \geq 0}$ in $\mathcal{H}_{rad}(\R^{2N})$, up a subsequence extraction, in terms of  asymptotically orthogonal
profiles in the spirit of the example by Moser. The last section is devoted to the proof of  Proposition~\ref{f_alpha}.

\medbreak
\noindent Finally, we mention that, $C$ will be used to denote a constant
which may vary from line to line. We also use $A\lesssim B$ to
denote an estimate of the form $A\leq C B$ for some absolute
constant $C$.
For simplicity, we shall also still denote by $(u_n)$ any
subsequence of $(u_n)$.

\section{Proof of the Theorem \ref{thm1}}
To establish Estimate \eqref{inequality d1}, we shall follow the 2D approach adopted in \cite{Ruf}  by setting for a fixed $r_0>0$ (to be chosen later on)
   \begin{equation*}
   I_1:=\Int_{B(r_0)}\Big({\rm e}^{\gamma_N|u(x)|^2}-1\Big)\; dx\,\quad \mbox{and}\quad
   I_2:=\Int_{\R^{2N}\backslash B(r_0)}\Big({\rm e}^{\gamma_N |u(x)|^2}-1\Big)\; dx\,,
   \end{equation*}
   where $B(r_0)$ denotes the ball centered at the origin and of radius $r_0$.\medbreak
\noindent The idea consists to show that it is possible to choose a suitable $r_0>0$ independently of $u$ such that $I_1$ and $I_2$ are bounded by a constant only depending on $r_0$ and $N$.

   \smallskip
 \noindent
   Let us start by studying the part $I_2$. Using the power series expansion of the exponential, we can write
   $$I_2=\Sum_{k=1}^\infty\frac{\gamma_N^k}{k!}\,I_{2,k}\,,\; \mbox{where}\quad  I_{2,k}:= \Int_{\R^{2N}\backslash B(r_0)}|u(x)|^{2k}\;dx\,.$$
   In order to estimate $I_{2,k}$, we take advantage  of the following radial estimate available for any function $u$ in $H^1_{rad}(\R^{2N})$ (for further details, see \cite{Sani}):
   \begin{equation}\label{radial estimate d}
   |u(x)|\leq \sqrt{\frac{(N-1)!}{\pi^N}}  \, \frac{ \|u\|_{H^1(\R^{2N})}}  {|x|^{N-\frac{1}{2}}}\quad \mbox{for a.e.} \quad x\in\R^{2N}\,,
   \end{equation}
 which  for  any integer $k\geq 2$, implies that
      \begin{eqnarray*}
   I_{2,k}&\leq& \left(\frac{(N-1)!}{\pi^N}\right)^{k}\|u\|_{H^1(\R^{2N})}^{2k}\frac{2\pi^N}{(N-1)!}\Int_{r_0}^\infty \frac{dr}{r^{(k-1)(2N-1)}}\\
   &\leq&\frac{2\pi^N}{(N-1)!} \left(\frac{(N-1)!}{\pi^N}\right)^{k}\|u\|_{H^1(\R^{2N})}^{2k}\frac{r_0^{k(1-2N)+2N}}{(2N-1)k-2N}\\
   &\leq&\frac{2\pi^N}{(N-1)!} \frac{r_0^{2N}}{2(N-1)} \left(\frac{(N-1)!}{\pi^N}\right)^{k}\|u\|_{H^1(\R^{2N})}^{2k}\frac{1}{r_0^{(2N-1)k}}\,\cdot
   \end{eqnarray*}
This gives rise to
\begin{eqnarray*}
I_2&\leq& \gamma_N\|u\|_{L^2(\R^{2N})}^2+\frac{2\pi^N}{(N-1)!} \frac{r_0^{2N}}{2(N-1)}\Sum_{k=2}^\infty \frac{1}{k!}\left(\frac{\gamma_N(N-1)!}{\pi^N}\frac{\|u\|_{H^1(\R^{2N})}^{2}}{r_0^{(2N-1)}}\right)^k\\
&\leq& \gamma_N+\frac{2\pi^N}{(N-1)!} \frac{r_0^{2N}}{2(N-1)}\Sum_{k=2}^\infty \frac{1}{k!}\left(\gamma_N\frac{(N-1)!}{\pi^N}\frac{1}{r_0^{(2N-1)}}\right)^k\,\virgp
\end{eqnarray*}
 under the fact that $\|u\|_{\mathcal{H}(\R^{2N})}\leq 1,$ which ensures that $I_2$ is bounded by a constant only dependent of  $r_0$ and $N$.
\medbreak\noindent
 In order to estimate $I_1$, we shall make use of the following Adams-type inequality, the proof of which is postponed at the end of the section.
 \begin{prop}\label{ball d}
There exists a constant $C_N>0$ such that for any positive real number $R$, we have
\begin{equation*}
\Sup_{u\in\big( \mathcal{H}_{rad}\cap H^1_0\big)(B(R)),\,\big\|\frac{\nabla u}{|\,.\,|^{N-1}}\big\|_{L^2}\leq 1}\Int_{B(R)} {\rm e}^{\gamma_N|u(x)|^2}\;dx\leq C_NR^{2N}\,,
\end{equation*}
 and this inequality is sharp.
\end{prop}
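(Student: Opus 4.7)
The plan is to reduce the $2N$-dimensional weighted problem on $B(R)$ to a one-dimensional Moser-type inequality on the half-line via a conformal change of variable, mimicking Moser's original 2D argument but accommodating the Hardy-type weight $|x|^{-2(N-1)}$. For radial $u\in(\mathcal{H}_{rad}\cap H^1_0)(B(R))$, polar coordinates together with $|S^{2N-1}|=2\pi^N/(N-1)!$ give
\begin{equation*}
\Big\|\tfrac{\nabla u}{|\cdot|^{N-1}}\Big\|_{L^2(B(R))}^{2}=\tfrac{2\pi^N}{(N-1)!}\int_0^R u'(r)^2\, r\, dr,\qquad \int_{B(R)} {\rm e}^{\gamma_N u^2}\,dx=\tfrac{2\pi^N}{(N-1)!}\int_0^R {\rm e}^{\gamma_N u(r)^2}\, r^{2N-1}\,dr.
\end{equation*}
Setting $v(t):=u(R{\rm e}^{-t})$ for $t\geq 0$ (so that $v(0)=u(R)=0$ by the Dirichlet condition) and using $dr=-r\,dt$, a direct computation turns these two integrals into $\int_0^\infty v'(t)^2\,dt$ and $R^{2N}\int_0^\infty {\rm e}^{\gamma_N v(t)^2-2Nt}\,dt$ respectively. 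After the rescaling $w:=\sqrt{2\pi^N/(N-1)!}\,v$, the hypothesis reads $\|w'\|_{L^2(\R_+)}\leq 1$, and the exact value $\gamma_N=4\pi^N N/(N-1)!$ is precisely what makes $\gamma_N v^2=2Nw^2$.

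The problem therefore boils down to the one-dimensional estimate
\begin{equation*}
\int_0^\infty {\rm e}^{2Nw(t)^2-2Nt}\,dt\leq C\qquad \text{for every }w\in H^1(\R_+),\ w(0)=0,\ \|w'\|_{L^2}\leq 1,
\end{equation*}
which is Moser's classical one-dimensional lemma (after the further rescaling $s=2Nt$, $\tilde w:=\sqrt{2N}\,w$ it reads $\int_0^\infty {\rm e}^{\tilde w(s)^2-s}\,ds\leq C$). The Cauchy-Schwarz bound $w(t)^2\leq t\int_0^t w'(s)^2\,ds\leq t$ already makes the integrand pointwise $\leq 1$; uniform integrability in $w$ is then obtained by Moser's standard partition of $(0,\infty)$ according to a level set of $t\mapsto \int_0^t w'(s)^2\,ds$, or equivalently by a distribution-function computation. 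Undoing the change of variables yields the conclusion with $C_N=\frac{2\pi^N}{(N-1)!}\,C$; this 1D lemma is the only substantive step, and it is classical.

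For sharpness, the 1D extremizer $w_k(t):=\min(\sqrt{k},\,t/\sqrt{k})$ satisfies $\|w_k'\|_{L^2}=1$ and $w_k(t)^2=k$ for $t\geq k$; pulled back through the substitutions above, it produces (up to normalization) the Moser-type sequence $f_k$ of \eqref{fk} restricted to $B(R)$. Replacing $\gamma_N$ by any $\gamma>\gamma_N$ pushes the rescaled exponent $\alpha:=\gamma(N-1)!/(2\pi^N)$ strictly above the critical 1D value $2N$, and the tail contribution $\int_k^\infty {\rm e}^{\alpha k-2Nt}\,dt={\rm e}^{(\alpha-2N)k}/(2N)$ then diverges as $k\to\infty$, forcing the supremum in the proposition to become infinite. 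The critical 1D exponent $2N$ being matched to $\gamma_N$ by the dimensional normalization, no fine-tuning of constants is needed to reach the sharp Adams-type constant.
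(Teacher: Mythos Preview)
Your argument is correct and follows the same overall strategy as the paper: reduce the weighted radial inequality on $B(R)$ by a change of variable to a classical sharp inequality, then read off sharpness from the Moser sequence. The only difference is the choice of substitution and the target inequality. The paper sets $s=r^N$ and $w(s)=\sqrt{N\pi^{N-1}/(N-1)!}\,u(s^{1/N})$, which converts the weighted Dirichlet integral $\frac{2\pi^N}{(N-1)!}\int_0^R u'(r)^2\,r\,dr$ into $2\pi\int_0^{R^N}w'(s)^2\,s\,ds$ and the exponential integral into $\frac{2\pi^N}{N!}\int_0^{R^N}{\rm e}^{4\pi w(s)^2}\,s\,ds$; it then invokes the $2$D radial Trudinger--Moser inequality on the disc $B(R^N)\subset\R^2$ (Proposition~\ref{bounded d} with $N=1$). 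You instead take the logarithmic substitution $t=-\log(r/R)$ and land directly in Moser's one-dimensional lemma $\int_0^\infty{\rm e}^{\tilde w(s)^2-s}\,ds\leq C$. The two routes are equivalent, since the $2$D radial Trudinger--Moser inequality is itself established precisely via that logarithmic change of variable and the $1$D lemma. Your path is marginally more self-contained (it skips the intermediate $2$D statement), while the paper's has the cosmetic advantage of quoting a named result in the literature; neither gains anything substantive over the other.
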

  Let us admit this proposition for the time being, and continue the proof of the theorem. The key point consists to associate to a function $u$ in $\mathcal{H}_{rad}(B(r_0))$ with $\|u\|_{\mathcal{H}(\R^{2N})}\leq 1$ an auxiliary function $w\in \big(\mathcal{H}_{rad}\cap H^1_0\big)(B(r_0))$ such that
$$\left\|\frac{\nabla w}{|\,.\,|^{N-1}}\right\|_{L^2(B(r_0))}\leq 1\quad\mbox{and}\quad u^2\leq w^2+d(r_0)\,,$$
where the function $d(r_0)>0$  depends only on $r_0$.
To this end, let us first emphasize that if $u$ belongs to $\mathcal{H}_{rad}(B(r_0))$ and satisfies $\|u\|_{\mathcal{H}(\R^{2N})}\leq 1\,,$ then $u$ is continuous far away from the origin. Indeed, for any real numbers $r_2>r_1>0\,,$ writing
$$u(r_2)-u(r_1)=\Int_{r_1}^{r_2}u'(s)\;ds\,,$$
we get by Cauchy-Schwarz inequality
\begin{eqnarray*}
|u(r_2)-u(r_1)|&\leq& \Big(\Int_{r_1}^{r_2}|u'(s)|^2s^{2N-1}\;ds\Big)^\frac{1}{2}\Big(\Int_{r_1}^{r_2}s^{- (2N-1)}\;ds\Big)^\frac{1}{2}
\\
&\leq& C\|\nabla u\|_{L^2(\R^{2N})} \Big(\Int_{r_1}^{r_2}s^{- (2N-1)}\;ds\Big)^\frac{1}{2}\,,
\end{eqnarray*}
which leads to the result. Thus, for any $0<r<r_0$, we can define the function $$v(r):=u(r)-u(r_0)\,,$$
which clearly  belongs to $\big(\mathcal{H}_{rad}\cap H^1_0\big)(B(r_0))\,.$
 In light of the radial estimate \eqref{radial estimate d}, this implies that
\begin{eqnarray*}
u^2(r)&\leq&v^2(r)+v^2(r)u^2(r_0)+1+u^2(r_0)\\
&\leq& v^2(r)+v^2(r) \frac{(N-1)!}{\pi^N} \frac{\|u\|_{H^1(\R^{2N})}^2}{r_0^{2N-1}}+1+ \frac{(N-1)!}{\pi^N} \frac{\|u\|_{H^1(\R^{2N})}^2}{r_0^{2N-1}}\\
&\leq& v^2(r)\left(1+  \frac{(N-1)!}{\pi^N} \frac{\|u\|_{H^1(\R^{2N})}^2}{r_0^{2N-1}}\right)+d(r_0)\,,
\end{eqnarray*}
where $d(r_0):=1+ \Frac{(N-1)!}{\pi^N} \Frac{\|u\|_{H^1(\R^{2N})}^2}{r_0^{2N-1}}\,\cdot$

\smallskip
\noindent Now by construction, the function
$$w(r):=v(r)\sqrt{1+  \frac{(N-1)!}{\pi^N} \frac{\|u\|_{H^1(\R^{2N})}^2}{r_0^{2N-1}}}\,\virgp$$
 belongs to $\big(\mathcal{H}_{rad}\cap H^1_0\big)(B(r_0))$, and easily satisfies
\begin{eqnarray*}
\Int_{B(r_0)}\frac{|\nabla w(x)|^2}{|x|^{2(N-1)}}\;dx&=& \left(1+  \frac{(N-1)!}{\pi^N} \frac{\|u\|_{H^1(\R^{2N})}^2}{r_0^{2N-1}}\right)\Int_{B(r_0)}\frac{|\nabla u(x)|^2}{|x|^{2(N-1)}}\;dx\\
&\leq&\left(1+\frac{(N-1)!}{\pi^N} \frac{\|u\|_{H^1(\R^{2N})}^2}{r_0^{2N-1}}\right) \left(1-\|u\|_{H^1(\R^{2N})}^2\right)\leq 1\,,
 \end{eqnarray*}
 provided that  $ \Frac{\pi^N} {(N-1)!}\,r_0^{2N-1}\geq 1$.

 \smallskip
\noindent  Applying Proposition \ref{ball d} with $r_0$ fixed so that $ \Frac{\pi^N} {(N-1)!}\,r_0^{2N-1}\geq 1$, we deduce that
$$ I_1\leq {\rm e}^{\gamma_N d(r_0)}\int_{B(r_0)}{\rm e}^{\gamma_N|w(x)|^2}\;dx\leq C_N\,{\rm e}^{\gamma_Nd(r_0)}\, r_0^{2N}\,,$$
which ensures the desired estimate, up to the proof of Proposition \ref{ball d}.

\medskip

To achieve the proof of  Identity \eqref{inequality d1},
let us then establish Proposition \ref{ball d}. To this end, let us for a function $u$ in $\big(\mathcal{H}_{rad}\cap H^1_0\big)(B(R))$ satisfying $\left\|\frac{\nabla u}{|\,.\,|^{N-1}}\right\|_{L^2(\R^{2N})}\leq 1$,  denote  by $$I(R):=\Int_{B(R)}{\rm e}^{\gamma_N |u(x)|^2}\;dx\,.$$
Our aim is  to show that $$I(R)\leq C_N R^{2N} \quad \mbox{whenever} \quad\Frac{2\pi^N}{(N-1)!}\Int_0^R|v'(r)|^2r\;dr\leq 1\,.$$ For that purpose, let us  perform  the change of variable $s=r^N$, and introduce the function $w(s)=\sqrt{\Frac{N\pi^{N-1}}{(N-1)!}}\,v\big(s^\frac{1}{N}\big)$. Recalling that $\gamma_N=\Frac{4\pi^N N}{(N-1)!}\, \virgp$ we infer that
$$I(R)=\frac{2\pi^N} {(N-1)!}\Int_0^{R} {\rm e}^{\gamma_N  |v(r)|^2} r^{2N-1}\;dr=\frac{2\pi^N} {N!}\Int_0^{R^N}{\rm e}^{4\pi |w(s)|^2} s\;ds\,\quad\mbox{and}$$
$$\frac{2\pi^N}{(N-1)!}\Int_0^R|v'(r)|^2r\;dr= 2\pi \Int_0^{R^N}|w'(s)|^2s\;ds\,.$$
The conclusion stems then from the 2D   radial framework of Proposition \ref{bounded d}.

\medskip Now in order to  prove the sharpness of the exponent $\gamma_N$, let us consider the sequence $(f_k)$ defined by \eqref{fk}. Since according to \eqref{formula}, we have
 $$\|f_k\|_{\mathcal{H}(\R^{2N})}=1+\circ(1)\,,\quad\mbox{as}\;k\rightarrow\infty\,,$$
 we get for any $\gamma > \gamma_N$
\begin{eqnarray*}
\Int_{\R^{2N}}\Big({\rm e}^{\gamma \big|\frac{f_k(x)}{\|f_k\|_{\mathcal{H}(\R^{2N})}}\big|^2}-1\Big)\;dx&\geq& \frac{2\pi^N}{(N-1)!}\Int_0^{{\rm e}^{-k}}\Big ({\rm e}^\frac{ 2N k \gamma}{\gamma_N(1+\circ(1))}-1\Big) r^{2N-1}\;dr\\
&\geq& \frac{\pi^N}{N!}\Big({\rm e}^{2Nk\frac{\gamma-\gamma_N(1+\circ(1))}{\gamma_N(1+\circ(1))}}-{\rm e}^{-2Nk}\Big)\stackrel{k\rightarrow \infty}\longrightarrow\infty\,,
\end{eqnarray*}
which ends the proof of the theorem.

\medskip

 \noindent

\section{Proof of Theorem \ref{thm2}}
\subsection{Scheme of the proof}
The proof of Theorem \ref{thm2}  relies on a diagonal subsequence extraction and uses in a
crucial way the radial setting and particularly the fact that we
deal with bounded functions far away from the origin. The heart of
the matter is reduced to the proof of the following lemma: \begin{lem}\label{heart}
Let $(u_n)_{n \geq 0}$ be a bounded sequence in $\mathcal{H}_{rad}(\R^{2N})$ satisfying Assumptions \eqref{H1}, \eqref{H2} and \eqref{H3}. Then there exist a scale $(\alpha_n)_{n \geq 0} $ and a profile $\psi$ in the sense of Definition \ref{object}, such that
\begin{equation}\label{prof}
\|\psi'\|_{L^2(\R)}\geq C_N A_0\,,
\end{equation}
where $C_N$ is a constant depending only on $N$.
 \end{lem}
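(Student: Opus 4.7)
The plan is to pass to the logarithmic radial coordinate $s=-\log|x|$, under which the profile structure of the theorem becomes transparent. Writing $U(s):=u(e^{-s})$ for a radial $u\in\mathcal{H}_{rad}(\R^{2N})$, a change of variable $r=e^{-s}$ yields
\begin{equation*}
\int_{\R^{2N}}\bigl(e^{\gamma|u|^2}-1\bigr)\,dx = \frac{2\pi^N}{(N-1)!}\int_{\R}\bigl(e^{\gamma|U(s)|^2}-1\bigr)e^{-2Ns}\,ds,\quad \Big\|\frac{\nabla u}{|\,.\,|^{N-1}}\Big\|_{L^2}^2 = \frac{2\pi^N}{(N-1)!}\int_{\R}|U'(s)|^2\,ds.
\end{equation*}
Combined with Cauchy-Schwarz this gives $|U(s)-U(0)|\le \sqrt{s}\,\|U'\|_{L^2(\R)}$ for $s\ge 0$, and the radial estimate \eqref{radial estimate d} controls $|U(0)|=|u(1)|$ uniformly. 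Consequently the quantity $M_n:=\sup_{s>0}|u_n(e^{-s})|/\sqrt{s}$ is uniformly bounded in $n$, and the goal is to find a scale $\alpha_n\to\infty$ along which $\psi_n(s):=\sqrt{\gamma_N/(2N\alpha_n)}\,u_n(e^{-\alpha_n s})$ admits a non-trivial weak limit in $\mathcal{P}$.

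The crucial step, which I expect to be the main obstacle, is the lower bound $\liminf_n M_n\ge c_N A_0$ for some explicit $c_N=c_N(N)>0$. I would argue by contradiction: assume $M_n\to 0$ along a subsequence, and split the Orlicz integral $\int_{\R^{2N}}(e^{|u_n/(A_0-\varepsilon)|^2}-1)\,dx$ -- which by the very definition of the Orlicz norm and \eqref{H2} exceeds $\kappa'$ along a subsequence -- into an outer piece $|x|\ge R$ and an inner piece. The outer contribution is negligible as $R\to\infty$ by \eqref{H3} combined with the radial decay $|u_n(r)|\lesssim r^{-(N-1/2)}$. In the $s$-variable, the inner contribution decomposes further: for $s$ beyond a threshold $S_0$, the bound $|U_n(s)|^2\le M_n^2 s$ with $M_n\to 0$ makes the integrand bounded by $e^{-\delta s}$ with $\delta=2N - o(1)>0$, giving an exponentially small tail; on the bounded window $[\log(1/R),S_0]$, the weak convergence \eqref{H1} and Rellich's theorem on bounded domains provide (after extraction) pointwise a.e.\ convergence $u_n\to 0$, and dominated convergence eliminates this part, contradicting the positive lower bound on the Orlicz integral.

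Once the bound is secured, choose $\alpha_n>0$ with $|u_n(e^{-\alpha_n})|/\sqrt{\alpha_n}\ge M_n-1/n$, and observe $\alpha_n\to\infty$, for otherwise the same compactness argument applied on a bounded $s$-window would force $M_n\to 0$. By the change-of-variable formula, $\|\psi_n'\|_{L^2(\R)}^2 = \|\nabla u_n/|\,.\,|^{N-1}\|_{L^2(\R^{2N})}^2$ is bounded, so $\psi_n'\rightharpoonup g$ weakly in $L^2(\R)$ up to extraction. The radial estimate gives $\psi_n(s)\to 0$ pointwise on $(-\infty,0]$, forcing $g\equiv 0$ on $(-\infty,0)$, and the inequality $|\psi(s)|\le\sqrt{s}\,\|\psi'\|_{L^2(\R)}$ for $s\ge 0$ ensures that $\psi:=\int_0^{\cdot} g\,d\tau$ lies in $L^2(\R,e^{-2Ns}ds)$, i.e.\ $\psi\in\mathcal{P}$. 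Pairing $\psi_n'$ against $\mathbf{1}_{[0,1]}$ and noting $\psi_n(0)=\sqrt{\gamma_N/(2N\alpha_n)}\,u_n(1)\to 0$ yields $\psi_n(1)\to\psi(1)$, whence
\begin{equation*}
|\psi(1)| = \lim_n\sqrt{\frac{\gamma_N}{2N\alpha_n}}\,|u_n(e^{-\alpha_n})|\ge \sqrt{\frac{\gamma_N}{2N}}\,c_N A_0,
\end{equation*}
and the Cauchy-Schwarz bound $|\psi(1)|^2\le\|\psi'\|_{L^2(\R)}^2$ concludes with $\|\psi'\|_{L^2(\R)}\ge C_N A_0$. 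The whole difficulty concentrates in the lower bound on $M_n$, which is precisely where \eqref{H1}, \eqref{H2}, \eqref{H3} and \eqref{radial estimate d} must all be brought into play simultaneously.
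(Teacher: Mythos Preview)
Your architecture mirrors the paper's: pass to logarithmic coordinates, locate $\alpha_n\to\infty$ with $|u_n(e^{-\alpha_n})|/\sqrt{\alpha_n}$ bounded below by a universal multiple of $A_0$, rescale, take a weak $L^2$-limit of the derivatives, and conclude via $\|\psi'\|_{L^2}\ge|\psi(1)|$. Two points, however, do not go through as written.

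First, $M_n:=\sup_{s>0}|U_n(s)|/\sqrt{s}$ is in general $+\infty$: since $U_n(0)=u_n(1)$ need not vanish and $U_n$ is continuous, $|U_n(s)|/\sqrt{s}\to\infty$ as $s\to0^+$. Your Cauchy--Schwarz bound only gives $|U_n(s)|\le|U_n(0)|+\sqrt{s}\,\|U_n'\|_{L^2}$, which does not control the ratio; and with $M_n=+\infty$ the near-maximiser $\alpha_n$ is pushed to $0^+$, so your argument that $\alpha_n\to\infty$ collapses. The paper avoids this by working with the functional $W_n(s)=4\,|v_n(s)/A_0|^2-(2N-1)s$ instead. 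Second --- and more important for the lemma as stated --- the contradiction hypothesis ``$M_n\to0$ along a subsequence'' only yields $\liminf_n M_n>0$, a sequence-dependent positive number rather than a universal multiple $c_NA_0$; yet the uniformity of $C_N$ is exactly what makes the iteration in Theorem~\ref{thm2} terminate. The paper secures the explicit constant by proving (Proposition~\ref{step1}, following~\cite{Bahouri}) that $\sup_{s\ge0}W_n(s)\to\infty$, after which choosing $\alpha_n$ near this supremum directly forces $|v_n(\alpha_n)|\ge\tfrac{A_0}{2}\sqrt{(2N-1)\,\alpha_n}$. Your route is salvageable: run the contradiction quantitatively, assuming $|U_n(s)|^2\le c^2A_0^2\,s$ for all $s\ge S_0$, and check that the inner Orlicz contribution at level $\lambda=A_0-\varepsilon$ is then bounded by an explicit function of $c$ that falls below $\kappa'$ once $c$ lies below a threshold depending only on $N$.
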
 Inspired by the strategy developed in \cite{Bahouri}, the proof  is done in three steps. In the first
step, according to Lemma \ref{heart}, we extract the first scale
and the first profile satisfying Inequality \eqref{prof}. This
reduces the problem to the study of the remainder term. If the limit of its Orlicz norm is null we stop the process. If not, we prove
that this remainder term satisfies the same properties as the
sequence start which allows us to  extract a second  scale and a second
profile which verifies the above key property \eqref{prof}, by following  the lines of reasoning
 of the first step.  Thereafter, we establish the property of orthogonality
between the two first scales. Finally, we prove that this process converges.

\medbreak
\subsection{Extraction of the first scale and the first profile}
Let us consider a bounded sequence $(u_n)_{n \geq 0}$ in $\mathcal{H}_{rad}(\R^{2N})$ satisfying the assumptions of  Theorem \ref{thm2}, and let us set $v_n(s):= u_n({\rm e}^{-s})$. Then,  we have the following lemma:
\begin{lem}
Under the above assumptions, the sequence $(u_n)_{n \geq 0}$ converges strongly to $0$ in
$L^2(\R^{2N})$, and we have   for any real number $M$,
\begin{equation}\label{vn}
\Lim_{n\rightarrow\infty}\|v_n\|_{L^\infty(]-\infty,M])}=0\,.
\end{equation}
\end{lem}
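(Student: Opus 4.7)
My approach treats the two conclusions separately, using hypothesis \eqref{H3} for the tail behaviour and radial Sobolev techniques for the bounded-annulus part.

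For the first claim, $u_n\to 0$ in $L^2(\R^{2N})$: the inclusion $\mathcal{H}(\R^{2N})\hookrightarrow H^1(\R^{2N})$ is immediate from the very definition of the norm on $\mathcal H$, so \eqref{H1} yields $u_n\rightharpoonup 0$ in $H^1(\R^{2N})$. Given $\varepsilon>0$, hypothesis \eqref{H3} supplies $R>0$ with $\Limsup_n\int_{|x|\geq R}|u_n(x)|^2\,dx<\varepsilon/2$. On the ball $B(R)$, the Rellich--Kondrachov theorem upgrades weak $H^1$ convergence to strong $L^2$ convergence, so $\int_{B(R)}|u_n|^2\,dx<\varepsilon/2$ for $n$ large, and the two estimates combine to give $\|u_n\|_{L^2(\R^{2N})}\to 0$.

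For the second claim \eqref{vn}, the change of variable $r={\rm e}^{-s}$ rewrites the statement as $\Sup_{r\geq {\rm e}^{-M}}|u_n(r)|\to 0$, and I would split this supremum at a large radius $R>{\rm e}^{-M}$. On the exterior $\{r\geq R\}$, the radial Sobolev estimate \eqref{radial estimate d} (already invoked in the proof of Theorem \ref{thm1}) yields $|u_n(r)|\leq C\,\|u_n\|_{H^1(\R^{2N})}\,R^{-(N-1/2)}$, which is uniformly small since $(u_n)$ is bounded in $\mathcal{H}$ and $N\geq 2$. On the annulus $\{{\rm e}^{-M}\leq r\leq R\}$, radiality lets me regard $u_n$ as a function on the one-dimensional interval $[{\rm e}^{-M},R]$, where the Jacobian weight $r^{2N-1}$ is bounded above and below by positive constants; the sequence is thus bounded in the unweighted $H^1([{\rm e}^{-M},R])$, equicontinuous via the one-dimensional embedding $H^1\hookrightarrow C^{1/2}$, and Arzel\`a--Ascoli combined with the $L^2$ convergence established in the first claim forces uniform convergence to $0$ on the annulus.

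The step I expect to be the most delicate is the passage from $L^2$ to $L^\infty$ convergence on the annulus, since Rellich--Kondrachov alone only supplies strong convergence in $L^p$ for $p<\infty$. Radiality is essential here: it reduces the question to one space dimension, where $H^1$ embeds into $C^{1/2}$, and this is precisely where the radial assumption on $(u_n)$ enters the argument. Once this is in hand, matching the two regimes is a routine $\varepsilon/2$ bookkeeping, $R$ being chosen first to tame the exterior and then $n$ chosen large enough to handle the annulus.
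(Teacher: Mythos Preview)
Your argument is correct, and the $L^2$ step matches the paper's exactly. For \eqref{vn}, however, the paper takes a shorter path: rather than splitting into an exterior and an annulus, it invokes the interpolation form of the radial Strauss estimate
\[
|u(x)|\leq \sqrt{\frac{(N-1)!}{\pi^N}}\,\frac{\|u\|_{L^2(\R^{2N})}^{1/2}\,\|\nabla u\|_{L^2(\R^{2N})}^{1/2}}{|x|^{N-1/2}}\,,
\]
whose numerator contains the factor $\|u_n\|_{L^2}^{1/2}$ that you have just shown goes to zero. This gives uniform convergence on all of $\{|x|\geq {\rm e}^{-M}\}$ in one stroke, with no need for Arzel\`a--Ascoli on the annulus. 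Your route, based on the weaker estimate \eqref{radial estimate d} (which only involves $\|u\|_{H^1}$ and hence does not itself decay), is perfectly sound but requires the additional compactness argument you outlined; the trade-off is that your method is more self-contained, relying only on the version of the radial bound already used in Section~2, whereas the paper's shortcut needs the sharper interpolated form.
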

\begin{proof}
 Let us first observe that for any positive real number $R$, we have
$$ \|u_n\|_{L^2(\R^{2N})}=\|u_n\|_{L^2(|x|\leq R)}+\|u_n\|_{L^2(|x|> R)}\,.
$$ Now, invoking  Rellich's theorem and the Sobolev embedding of $\mathcal{H}(\R^{2N})$ into $H^1(\R^{2N})$,  we infer that the space $\mathcal{H}(|x|<R)$ is compactly
embedded in $L^2(|x|<R)$. Therefore,
$$ \limsup_{n\to\infty}\, \|u_n\|_{L^2(|x|<R)}\stackrel{n\rightarrow\infty}\longrightarrow 0\,.$$
Taking advantage of the hypothesis of the compactness at infinity \eqref{H3}, we deduce the strong convergence of the sequence $(u_n)_{n \geq 0}$  to $0$ in $L^2(\R^{2N})$.

\medbreak\noindent
Finally, \eqref{vn} stems from  the strong convergence to zero of $(u_n)_{n \geq 0}$ in $L^2(\R^{2N})$ and the following well-known radial estimate available for any function $u$ in $H^1_{rad}(\R^{2N})$:
\begin{equation*}
|u(x)|\leq \sqrt{\Frac{(N-1)!}{\pi^N}}\,\Frac{\|u\|_{L^2(\R^{2N})}^\frac{1}{2}\|\nabla u\|_{L^2(\R^{2N})}^\frac{1}{2}}{|x|^{N-\frac{1}{2}}}\,\,,\quad \mbox{for a.e.} \quad x\in\R^{2N}\,.
\end{equation*}
\end{proof}
\medbreak

Now, arguing as in the proof of Proposition 2.3 in \cite{Bahouri}, we deduce  the following result:
\begin{prop}
\label{step1} For any $\delta>0$, we have
\begin{equation}
\label{depart} \sup_{s\geq
0}\left(\Big|\frac{v_n(s)}{A_0-\delta}\Big|^2-(2N-1)s\right)\to\infty\,,\quad
n\to\infty\,.
\end{equation}
\end{prop}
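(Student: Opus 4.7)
I would argue by contradiction: if the conclusion fails, then up to a subsequence there exists $M < \infty$ with
$$|v_n(s)|^2 \leq (A_0-\delta)^2\bigl[(2N-1)s + M\bigr] \quad \text{for every } s \geq 0,$$
which via $u_n(x) = v_n(-\log|x|)$ yields the pointwise bound
$$|u_n(x)|^2 \leq (A_0-\delta)^2\bigl[-(2N-1)\log|x| + M\bigr] \quad \text{for } |x|\leq 1.$$
The plan is then to show that this bound, combined with \eqref{H3}, \eqref{vn}, and the $L^2$-strong convergence $u_n\to 0$ established just before the proposition, forces
$$\Int_{\R^{2N}}\bigl({\rm e}^{|u_n(x)|^2/\lambda^2}-1\bigr)\,dx \;\longrightarrow\; 0 \qquad(n\to\infty)$$
for any $\lambda\in(A_0-\delta,A_0)$. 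This would contradict $\Limsup_{n\to\infty}\|u_n\|_\mathcal{L} = A_0$: fixing $\lambda = A_0 - \delta/2$, the bound $\|u_n\|_{\mathcal{L}}>\lambda$ holding for large $n$ would force the integral to exceed the constant $\kappa'$ from Theorem \ref{thm1}.

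To prove this vanishing, I would fix $\lambda = A_0 - \delta/2$, set $\theta := (A_0-\delta)^2/\lambda^2 < 1$, and split $\R^{2N}$ into three regions: the tail $\{|x|>1\}$, an annulus $\{{\rm e}^{-S_0}\leq |x|\leq 1\}$, and the core $\{|x|<{\rm e}^{-S_0}\}$, where $S_0$ is an auxiliary parameter to be taken large. On the tail, expand ${\rm e}^{|u_n|^2/\lambda^2}-1$ as a power series and estimate $\int_{|x|>1}|u_n|^{2k}\,dx \leq \|u_n\|_{L^\infty(|x|>1)}^{2k-2}\|u_n\|_{L^2}^2$, using the radial estimate \eqref{radial estimate d} for the uniform $L^\infty$ bound and $\|u_n\|_{L^2}\to 0$ for the decay; the geometric factor $1/(\lambda^{2k} k!)$ then makes the series summable and tending to zero. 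On the annulus, identity \eqref{vn} gives $\|u_n\|_{L^\infty({\rm e}^{-S_0}\leq|x|\leq 1)}\to 0$, so the integrand vanishes uniformly and the annular contribution tends to $0$ for each fixed $S_0$. On the core, the pointwise bound yields ${\rm e}^{|u_n(x)|^2/\lambda^2}\leq {\rm e}^{\theta M}|x|^{-\theta(2N-1)}$; integrating in polar coordinates against $r^{2N-1}\,dr$ produces a contribution of order ${\rm e}^{-S_0[(1-\theta)(2N-1)+1]}$, which vanishes as $S_0\to\infty$ since $\theta<1$.

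A standard three-$\varepsilon$ argument then closes the proof: given $\varepsilon>0$, first choose $S_0$ large enough that the core contribution is below $\varepsilon$, then take $n$ large enough that both the tail and the annular contributions are each below $\varepsilon$. The main subtlety, and the one I expect to be the true obstacle to formalize cleanly, is the precise matching between the coefficient $(2N-1)$ in the statement and the polar weight $r^{2N-1}$: only because $\theta(2N-1) < 2N-1 < 2N$ is the exponential majorant integrable on the core, and only this exact coefficient allows the scheme to close for every $\lambda$ strictly above $A_0-\delta$ (so that $\theta<1$), giving the contradiction. This is the structural reason the proposition must carry precisely the factor $(2N-1)$.
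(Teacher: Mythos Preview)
Your argument is correct and is exactly the approach the paper invokes (it defers the proof to Proposition~2.3 of \cite{Bahouri} rather than reproducing it). The contradiction setup, the three-region split, and the core estimate via $e^{|u_n|^2/\lambda^2}\le e^{\theta M}|x|^{-\theta(2N-1)}$ with $\theta<1$ are precisely what that reference does, transposed from $\R^2$ to $\R^{2N}$.

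Two minor points are worth tightening. First, to close the contradiction you should pass at the outset to a subsequence along which $\|u_n\|_{\mathcal L}\to A_0$ (this is the standing convention of the paper, ``up to a subsequence extraction''); otherwise the subsequence on which the pointwise bound holds could in principle avoid all indices with $\|u_n\|_{\mathcal L}>\lambda$, and the last step would not yield a contradiction. Second, your closing commentary overstates the role of the coefficient $(2N-1)$. The integrability requirement on the core is $\theta\, c<2N$, and since $\theta<1$ this holds for $c=2N$ just as well as for $c=2N-1$; indeed $2N$ is the natural coefficient coming from the change of variables $r^{2N-1}\,dr=-e^{-2Ns}\,ds$. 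The proposition with $(2N-1)$ is simply a slightly weaker (and entirely sufficient) statement, not the unique value singled out by the argument.
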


A byproduct of the previous proposition is the following corollary:
\begin{cor}\label{alpha}
 Under the above notations, there exists a sequence $(\alpha_n^{(1)})_{n \geq 0}$ in $\R_+$
tending to infinity such that
\begin{equation}\label{alphan1}
4\,\Big|\frac{v_n(\alpha_n^{(1)})}{A_0}\Big|^2-(2N-1)\,\alpha_n^{(1)}\stackrel{n\rightarrow\infty}\longrightarrow\infty\,,
\end{equation}
and for $n$ sufficiently large, there exists a positive constant $C$ such that
\bq \label{bo}
\frac{ A_0}{2}\sqrt{(2N-1)\alpha_n^{(1)}}\leq| v_n(\alpha_n^{(1)})|\leq C
\sqrt{\alpha_n^{(1)}}+\circ(1)\,,
\eq
where $C=\sqrt{\frac{(N-1)!}{2\pi^N}}\Limsup_{n\rightarrow\infty}\Big\|\frac{\nabla u_n}{|\,.\,|^{N-1}}\Big\|_{L^2(\R^{2N})}\,.$
\end{cor}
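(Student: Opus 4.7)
The plan is to deduce the corollary directly from Proposition \ref{step1} by applying it with the specific choice $\delta = A_0/2$. With this value, $A_0 - \delta = A_0/2$, so $|v_n(s)/(A_0 - \delta)|^2 = 4|v_n(s)/A_0|^2$, and \eqref{depart} rewrites as
$$
\sup_{s \geq 0}\Big(4 |v_n(s)/A_0|^2 - (2N-1)s\Big) \stackrel{n \to \infty}\longrightarrow +\infty.
$$
Picking, for each $n$, some $\alpha_n^{(1)} \in \R_+$ whose value of this functional differs from the supremum by at most $1$ yields \eqref{alphan1} by construction. To see that $\alpha_n^{(1)} \to \infty$, I would argue by contradiction: if along a subsequence $\alpha_n^{(1)} \leq M$ for some fixed $M > 0$, then by \eqref{vn} we have $|v_n(\alpha_n^{(1)})| \leq \|v_n\|_{L^\infty((-\infty, M])} \to 0$, which would force the quantity in \eqref{alphan1} to stay bounded from above, contradicting its divergence to infinity.

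The lower bound in \eqref{bo} follows directly from \eqref{alphan1}: for $n$ sufficiently large the left-hand side of \eqref{alphan1} is nonnegative, so $4|v_n(\alpha_n^{(1)})|^2 / A_0^2 \geq (2N-1)\alpha_n^{(1)}$ and hence $|v_n(\alpha_n^{(1)})| \geq (A_0/2)\sqrt{(2N-1)\alpha_n^{(1)}}$.

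For the upper bound, I would perform the radial change of variable $r = e^{-s}$, under which $v_n(s) = u_n(e^{-s})$ and $v_n'(s) = -e^{-s} u_n'(e^{-s})$. Rewriting the weighted gradient norm in spherical coordinates and using this substitution, a direct computation gives
$$
\Big\|\frac{\nabla u_n}{|\,\cdot\,|^{N-1}}\Big\|_{L^2(\R^{2N})}^2 = \frac{2\pi^N}{(N-1)!} \int_{-\infty}^{+\infty} |v_n'(s)|^2\, ds.
$$
Writing $v_n(\alpha_n^{(1)}) = v_n(0) + \int_0^{\alpha_n^{(1)}} v_n'(t)\, dt$, applying the Cauchy--Schwarz inequality to the last integral on $[0,\alpha_n^{(1)}]$, and invoking $|v_n(0)| \to 0$ (again from \eqref{vn}) yields exactly the announced bound with constant $C = \sqrt{(N-1)!/(2\pi^N)}\, \Limsup_{n \to \infty} \|\nabla u_n/|\,\cdot\,|^{N-1}\|_{L^2(\R^{2N})}$. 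No substantial obstacle is expected, since the nontrivial analytic content has already been encapsulated in Proposition \ref{step1}; the rest is a choice of $\delta$, an elementary contradiction argument, and a standard Cauchy--Schwarz estimate after a logarithmic change of variable.
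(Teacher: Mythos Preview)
Your proposal is correct and follows essentially the same route as the paper: apply Proposition~\ref{step1} with $\delta=A_0/2$, choose $\alpha_n^{(1)}$ close to the supremum, rule out boundedness via \eqref{vn}, and obtain the upper bound in \eqref{bo} from Cauchy--Schwarz together with the identity $\|v_n'\|_{L^2(\R)}=\sqrt{\frac{(N-1)!}{2\pi^N}}\big\|\frac{\nabla u_n}{|\cdot|^{N-1}}\big\|_{L^2(\R^{2N})}$. The only cosmetic difference is that the paper takes $\alpha_n^{(1)}$ within $1/n$ of the supremum rather than within $1$, which is immaterial.
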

\begin{proof}
In order to establish \eqref{alphan1}, let us consider the sequences
$$W_n(s) := 4\,\Big|\frac{v_n(s)}{A_0}\Big|^2-(2N-1)s \quad \mbox{and} \quad a_n := \Sup_{s\geq 0} W_n(s)\,.$$
By definition, there exists a positive sequence $(\alpha_n^{(1)})_{n \geq 0}$ such that
$$W_n(\alpha_n^{(1)})\geq a_n-\Frac{1}{n}\,\cdot$$
Now, in view of  \eqref{depart}, $a_n\stackrel{n\rightarrow\infty}\longrightarrow\infty$ and then
$W_n(\alpha_n^{(1)})\stackrel{n\rightarrow\infty}\longrightarrow\infty$. It remains to prove that
$\alpha_n^{(1)}\stackrel{n\rightarrow\infty}\longrightarrow\infty$. If not, up to a subsequence  extraction,
the sequence $(\alpha_n^{(1)})_{n \geq 0}$ is bounded and so is
$(W_n(\alpha_n^{(1)}))_{n \geq 0}$ by \eqref{vn}, which  yields a contradiction.

\medbreak\noindent
Concerning Estimate \eqref{bo}, the left hand side  follows directly from \eqref{alphan1}. Besides, for any positive real number $s$, we have
$$|v_n(s)|\leq \Big|v_n(0)+\Int_0^sv_n'(\tau)\,d\tau\Big|\leq |v_n(0)|+s^\frac{1}{2}\|v_n'\|_{L^2(\R)}\,,$$
which according to \eqref{vn} which implies that $v_n(0)\stackrel{n\rightarrow\infty}\longrightarrow0$, and the following straightforward equality
$$\|v_n'\|_{L^2(\R)}=\sqrt{\frac{(N-1)!}{2\pi^N}}\,\Big\|\frac{\nabla u_n}{|\,.\,|^{N-1}}\Big\|_{L^2(\R^{2N})}\,\virgp$$ gives the right hand side of  Inequality \eqref{bo}, and thus ends the proof of the result.
\end{proof}

\medbreak
Corollary \ref{alpha} allows to extract the first scale, it remains to extract the first profile. To do so, let us set
$$\psi_n(y)=\sqrt{\frac{\gamma_N}{2N\alpha_n^{(1)}}} \,v_n(\alpha_n^{(1)} y)\,\cdot$$
It will be useful later on to point out that,  in view of Property \eqref{vn}, $\psi_n(0)\stackrel{n\rightarrow\infty}\longrightarrow 0$.

\medbreak\noindent The following result summarize the main properties of the sequence $(\psi_n)_{n \geq 0}$:
\begin{lem} Under notations of Corollary \ref{alpha}, there exists a profile $\psi^{(1)}\in \mathcal{P}$ such that, up to a subsequence extraction
$$\psi_n'\rightharpoonup {\psi^{(1)}}'\; in\; L^2(\R) \quad and \quad\big\|{\psi^{(1)}}'\big\|_{L^2}\geq\frac{A_0}{2}\sqrt{\frac{2N-1}{2N}\,\gamma_N}\,\cdot$$
\end{lem}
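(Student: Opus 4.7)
The plan is to extract the profile from the rescaled sequence $(\psi_n)$, check that it belongs to $\mathcal{P}$, and then transfer the pointwise lower bound from \eqref{bo} into a lower bound on the $L^2$-norm of its derivative via Cauchy--Schwarz.

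First, a direct change of variable gives
\[
\psi_n'(y) = \sqrt{\frac{\gamma_N\,\alpha_n^{(1)}}{2N}}\,v_n'(\alpha_n^{(1)}y),\qquad
\|\psi_n'\|_{L^2(\R)}^2 = \frac{\gamma_N}{2N}\,\|v_n'\|_{L^2(\R)}^2 = \Big\|\tfrac{\nabla u_n}{|\,.\,|^{N-1}}\Big\|_{L^2(\R^{2N})}^2,
\]
using $\gamma_N/(2N) = 2\pi^N/(N-1)!$ and the identity linking $\|v_n'\|_{L^2}$ to the weighted gradient norm. Since $(u_n)$ is bounded in $\mathcal{H}(\R^{2N})$, the sequence $(\psi_n')$ is bounded in $L^2(\R)$, so up to extraction $\psi_n' \rightharpoonup g$ weakly in $L^2(\R)$ for some $g \in L^2(\R)$.

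Next I would identify the weak limit as the derivative of a profile. Testing the weak convergence against the indicator $\mathbf{1}_{[0,y]}$ (resp.\ $\mathbf{1}_{[y,0]}$) yields $\int_0^y \psi_n'(\tau)\,d\tau \to \int_0^y g(\tau)\,d\tau$ for every $y \in \R$, and combined with $\psi_n(0) = \sqrt{\gamma_N/(2N\alpha_n^{(1)})}\,v_n(0) \to 0$ (using \eqref{vn} and $\alpha_n^{(1)}\to\infty$), this gives the pointwise convergence $\psi_n(y) \to \int_0^y g(\tau)\,d\tau$ for every $y \in \R$. Define $\psi^{(1)}(y) := \int_0^y g(\tau)\,d\tau$; by construction $(\psi^{(1)})' = g \in L^2(\R)$. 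To check that $\psi^{(1)}_{|\,]-\infty,0]} = 0$, I would fix $y < 0$ and observe that $\alpha_n^{(1)} y \to -\infty$, so by \eqref{vn} $|\psi_n(y)| \leq \sqrt{\gamma_N/(2N\alpha_n^{(1)})}\,\|v_n\|_{L^\infty(]-\infty,\alpha_n^{(1)}y])} \to 0$; thus $\psi^{(1)}(y) = 0$ for all $y \leq 0$. Finally, the weight in the profile space is handled by Cauchy--Schwarz: for $s > 0$, $|\psi^{(1)}(s)|^2 \leq s\,\|(\psi^{(1)})'\|_{L^2}^2$, which is integrable against ${\rm e}^{-2Ns}\,ds$ on $\R_+$, so $\psi^{(1)} \in \mathcal{P}$.

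For the lower bound, I evaluate at $y = 1$. On one hand, the pointwise convergence established above gives $\psi_n(1) \to \psi^{(1)}(1)$. On the other hand, the definition of $\psi_n$ together with the left-hand side of \eqref{bo} yields
\[
|\psi_n(1)| = \sqrt{\frac{\gamma_N}{2N\alpha_n^{(1)}}}\,|v_n(\alpha_n^{(1)})| \geq \frac{A_0}{2}\,\sqrt{\frac{(2N-1)\,\gamma_N}{2N}}.
\]
Passing to the limit gives $|\psi^{(1)}(1)| \geq \tfrac{A_0}{2}\sqrt{(2N-1)\gamma_N/(2N)}$. Since $\psi^{(1)}(0) = 0$, Cauchy--Schwarz on $[0,1]$ produces
\[
\big\|(\psi^{(1)})'\big\|_{L^2(\R)} \geq |\psi^{(1)}(1)| \geq \frac{A_0}{2}\sqrt{\frac{2N-1}{2N}\,\gamma_N},
\]
which is exactly the claimed estimate.

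The only delicate point is step three: one must combine the weak convergence $\psi_n' \rightharpoonup g$ with the uniform decay \eqref{vn} at $-\infty$ in order to localize the primitive $\psi^{(1)}$ on $\R_+$. Everything else (the normalization, the membership in $\mathcal{P}$, and the lower bound through Cauchy--Schwarz) is then essentially algebraic, hinging only on the precise value $\gamma_N = 4\pi^N N / (N-1)!$ and on the radial identity linking $\|v_n'\|_{L^2(\R)}$ to the weighted gradient.
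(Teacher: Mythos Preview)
Your proof is correct and follows essentially the same approach as the paper's: extract a weak limit $g$ of $(\psi_n')$ in $L^2$, set $\psi^{(1)}(y)=\int_0^y g$, verify $\psi^{(1)}\in\mathcal{P}$, and obtain the lower bound by evaluating at $y=1$ via \eqref{bo} and Cauchy--Schwarz. The only minor variation is in checking $\psi^{(1)}_{|\,]-\infty,0]}=0$: the paper shows $\psi_n\to 0$ in $L^2(]-\infty,0[)$ using the identity $\|u_n\|_{L^2(\R^{2N})}^2=(\alpha_n^{(1)})^2\int_\R|\psi_n(s)|^2\,{\rm e}^{-2N\alpha_n^{(1)}s}\,ds$, whereas you use the $L^\infty$ decay \eqref{vn} directly---your route is slightly more direct and avoids an extra a.e.\ subsequence extraction.
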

\begin{proof}
Noticing that $\|\psi_n'\|_{L^2(\R)}=\Big\|\frac{\nabla u_n}{|\,.\,|^{N-1}}\Big\|_{L^2(\R^{2N})}$\,, we infer that the sequence $(\psi_n')_{n \geq 0}$ is bounded in $L^2(\R)$. Thus, up to a subsequence extraction, $(\psi_n')_{n \geq 0}$ converges weakly in $L^2(\R)$ to some function $g$. Let us now introduce the function
$$\psi^{(1)}(s):=\Int_0^sg(\tau)\,d\tau\,.$$
Our aim is then  to prove
that $\psi^{(1)}$ is a profile and that  $\ds\big\|{\psi^{(1)}}'\big\|_{L^2}\geq\frac{A_0}{2}\sqrt{\frac{2N-1}{2N}\,\gamma_N}\,\cdot$

\medbreak\noindent On the one hand, applying Cauchy-Schwarz inequality, we get $$\big{|}\psi^{(1)}(s)\big{|}=\Big|\Int_0^sg(\tau)\; d\tau\Big|\leq \sqrt{s} \|g\|_{L^2(\R)}\,,$$ which  ensures that $\psi^{(1)}\in L^2(\R_+,{\rm e}^{-2Ns}ds)\,.$

\medbreak\noindent
On the other hand, we have $\psi^{(1)}(s)=0$ for all $s\leq 0$. Indeed, using the fact that $$\|u_n\|_{L^2(\R^{2N})}^2=\big(\alpha_n^{(1)}\big)^2\Int_\R |\psi_n(s)|^2 {\rm e}^{-2N\alpha_n^{(1)}s}\;ds\,,$$
we obtain that
$$\Int_{-\infty}^0|\psi_n(s)|^2\;ds\leq\Int_{-\infty}^0|\psi_n(s)|^2 {\rm e}^{-2N\alpha_n^{(1)}s}\;ds\leq \Frac{1}{\big(\alpha_n^{(1)}\big)^2}\|u_n\|_{L^2(\R^{2N})}^2\,\virgp$$
which implies  that $(\psi_n)_{n \geq 0}$ converges strongly to zero in $L^2(]-\infty,0[)$, and thus  for almost all $s\leq 0$ (still up to the  extraction of a  subsequence).

\medbreak\noindent But, we have \begin{equation*}
\psi_n(s)-\psi_n(0)=\Int_0^s \psi'_n(\tau)\;d\tau\underset{n\rightarrow \infty}\longrightarrow \Int_0^s g(\tau)\; d\tau=\psi^{(1)}(s)\,, \end{equation*}
which, according to the fact that $\psi_n(0)\stackrel{n\rightarrow\infty}\longrightarrow0$, implies that
\begin{equation}\label{a}
\psi_n(s)\stackrel{n\rightarrow \infty}\longrightarrow \psi^{(1)}(s)\,,\quad\forall\;s\in \R\,.
\end{equation}
We deduce that ${\psi^{(1)}}_{|]-\infty,0]}=0\,, $ which completes the proof of the fact that $\psi^{(1)}\in\mathcal {P}$.

\medbreak\noindent
Finally in light of \eqref{bo}, we have $$\big|\psi^{(1)}(1)\big|\geq\Frac{A_0}{2}\sqrt{\frac{2N-1}{2N}\,\gamma_N}\,\cdot$$
Since
$$\big\|{\psi^{(1)}}'\big\|_{L^2(\R)}\geq \Int_0^1\big|{\psi^{(1)}}'(\tau)\big|\;d\tau=\big|\psi^{(1)}(1)\big|\,,$$
this gives rise to
$$\big\|{\psi^{(1)}}'\big\|_{L^2}\geq\frac{A_0}{2}\sqrt{\frac{2N-1}{2N}\,\gamma_N}\,\virgp$$
which ends the proof of the key lemma \ref{heart}.
\end{proof}

\medbreak
\subsection{Study of the remainder term and iteration}
Our concern is to iterate the previous process and to prove that the algorithmic construction converges. For that purpose, let us first consider the remainder term
\begin{equation}\label{def}
{\rm r}_n^{(1)}(x)=u_n(x)-g_n^{(1)}(x)\,,
\end{equation}
where $$g_n^{(1)}(x)=\sqrt{\frac{2N\alpha_n^{(1)}}{\gamma_N}}\,\psi^{(1)}\left(\frac{-\log|x|}
{\alpha_n^{(1)}}\right)\,\cdot$$
It can be easily proved that  $({\rm r}^{(1)}_n)_{n \geq 0}$ is a bounded sequence in $\mathcal{H}_{rad}(\R^{2N})$ satisfying \eqref{H1}, \eqref{H3} and  the following property:
\begin{equation}\label{r1}
\Lim_{n\rightarrow \infty}\Big\|\frac{\nabla {\rm r}_n^{(1)}}{|\,.\,|^{N-1}}\Big\|_{L^2(\R^{2N})}^2=\Lim_{n\rightarrow\infty}\Big\|\frac{\nabla
u_n}{|\,.\,|^{N-1}}\Big\|_{L^2(\R^{2N})}^2-\big\|{\psi^{(1)}}'\big\|_{L^2(\R)}^2\,\cdot
\end{equation}

Let us now define  $A_1=\underset{n\rightarrow\infty}{\limsup}\|{\rm r}_n^{(1)}\|_{\mathcal{L}(\R^{2N})}\,.$ If $A_1=0\,,$ we stop the process. If not, arguing as above, we prove that there exists a constant $C$ such that
\begin{equation}\label{extr}
    \frac{ A_1}{2}\sqrt{(2N-1)\,\alpha_n^{(2)}}\leq \big|\tilde{{\rm r}}_n^{(1)}\big(\alpha_n^{(2)}\big)\big|\leq C \sqrt{\alpha_n^{(2)}}+o(1)\,,
\end{equation}
where $\tilde{{\rm r}}_n^{(1)}(s)={\rm r}_n^{(1)}({\rm e}^{-s})$ and that
 there exist a scale $(\alpha_n^{(2)})$ satisfying the statement of Corollary \ref{alpha} with $A_1$ instead of $A_0$ and a profile $\psi^{(2)}$ in $\mathcal{P}$ such that
 $${\rm r}_n^{(1)}(x)=\sqrt{\frac{2N\alpha_n^{(2)}}{\gamma_N}}\, \psi^{(2)}\left(\frac{-\log|x|}{\alpha_n^{(2)}}\right)+{\rm r}_n^{(2)}(x)\,,$$
 with $\big\|{\psi^{(2)}}'\big\|_{L^2}\geq\Frac{A_1}{2}\sqrt{\Frac{2N-1}{2N}\,\gamma_N}$ and
 \begin{equation*}
\Lim_{n\rightarrow \infty}\Big\|\frac{\nabla {\rm r}_n^{(2)}}{|\,.\,|^{N-1}}\Big\|_{L^2(\R^{2N})}^2=\underset{n\rightarrow \infty}{\lim}\Big\|\frac{\nabla
{\rm r}_n^{(1)}}{|\,.\,|^{N-1}}\Big\|_{L^2(\R^{2N})}^2-\big\|{\psi^{(2)}}'\big\|_{L^2(\R)}^2\,.
\end{equation*}
 Moreover, we claim that $(\alpha_n^{(1)})$ and $(\alpha_n^{(2)})$ are orthogonal in the sense of Definition~\ref{object}. Otherwise, there exists a constant $C$ such that
$$\frac{1}{C}\leq \left|\frac{\alpha_n^{(2)}}{\alpha_n^{(1)}}\right|\leq C\,.$$
 Making use of Equality \eqref{def}, we get
$$\tilde{{\rm r}}_n^{(1)}\big(\alpha_n^{(2)}\big)=\sqrt{\frac{2N\alpha_n^{(1)}}{\gamma_N}}\left(\psi_n\left(\frac{\alpha_n^{(2)}}{\alpha_n^{(1)}}\right)
-\psi^{(1)}\left(\frac{\alpha_n^{(2)}}{\alpha_n^{(1)}}\right)\right)\,\cdot$$
This implies that, up to a subsequence extraction,
$$\underset{n\rightarrow\infty}{\lim}\sqrt{\frac{\gamma_N}{2N\alpha_n^{(1)}}}~\tilde{{\rm r}}_n^{(1)}\big(\alpha_n^{(2)}\big)=\underset{n\rightarrow \infty}{\lim}\left(\psi_n\left(\frac{\alpha_n^{(2)}}{\alpha_n^{(1)}}\right)-\psi^{(1)}\left(\frac{\alpha_n^{(2)}}
{\alpha_n^{(1)}}\right)\right)=0\,,$$
which is in contradiction with the left hand side of Inequality \eqref{extr}.\medbreak\noindent
Finally, iterating the process, we get at step $\ell$
$$
u_n(x)=\Sum_{j=1}^{\ell}\,\sqrt{\frac{2N\alpha_n^{(j)}}{\gamma_N}}\;\psi^{(j)}\left(\frac{-\log|x|}{\alpha_n^{(j)}}\right)+{\rm r}_n^{(\ell)}(x)\,,
$$
with $$
\limsup_{n\to\infty}\,\|{\rm r}^{(\ell)}_n\|_{\mathcal{H}(\R^{2N})}^2\lesssim 1 -A_0^2-A_1^2-\cdots -A_{\ell-1}^2\,.
$$
This implies that $A_\ell\to 0$ as $\ell\to\infty$ and ends the proof of the theorem.
\section{Proof of Proposition \ref{f_alpha}
}\label{Appendix}
This section is devoted to the proof of Proposition \ref{f_alpha}. Actually, the fact that the sequence $(f_k)_{k \geq 0}$ converges weakly to $0$ in $ \mathcal{H}_{rad}(\R^{2N})$ stems from straightforward computations, and the heart of the matter consists to show that
\begin{equation} \label{lionmos}\|f_k\|_{\mathcal{L}(\R^{2N})} \stackrel{k\to\infty}\longrightarrow\Frac{1}{\sqrt{\gamma_N}}\,\cdot \end{equation}
 Firstly, let us  prove that $\Liminf_{k\rightarrow \infty}\|f_k\|_{\mathcal{L}(\R^{2N})}\geq \frac{1}{\sqrt{\gamma_N}}\,\cdot$ For that purpose, let us consider $\lambda>0$ such that
    $$\Int_{\R^{2N}}\Big({\rm e}^{\big|\frac{f_k(x)}{\lambda}\big|^2}-1\Big)\,dx\leq \kappa '\,.$$
    By definition, this gives rise to $$\Int_{|x|\leq {\rm e}^{-k}}\Big({\rm e}^{\big|\frac{f_k(x)}{\lambda}\big|^2}-1\Big)\,dx\leq \kappa '\,,$$
  and thus consequently
    $$\frac{\pi^N}{N!}\Big({\rm e}^\frac{2Nk}{\gamma_N\lambda^2}-1\Big){\rm e}^{-2Nk}\leq \kappa '\,.$$
    We deduce that
    $$\lambda^2\geq \Frac{2Nk}{\gamma_N\log(1+\frac{N!}{\pi^N}\,\kappa '\,{\rm e}^{2Nk})}\underset{k\rightarrow\infty}\longrightarrow \frac{1}{\gamma_N}\,\virgp$$
    which ensures that
    $$\Liminf_{k\rightarrow \infty}\|f_k\|_{\mathcal{L}(\R^{2N})}\geq \frac{1}{\sqrt{\gamma_N}}\,\cdot$$
   Now the fact  that $\Limsup_{k\rightarrow \infty}\|f_k\|_{\mathcal{L}(\R^{2N})}\leq \frac{1}{\sqrt{\gamma_N}}$ derives from the following proposition  the proof of which is postponed at the end of this section:  \begin{prop}\label{Adachi}
    Let $\gamma\in\,]0,\gamma_N[$. A positive constant $C_{\gamma,N}$ exists such that
    \begin{equation}\label{Adachi eq}
    \Int_{\R^{2N}}\Big({\rm e}^{\gamma|u(x)|^2}-1\Big)\; dx\leq C_{\gamma,N}\|u\|_{L^2(\R^{2N})}^2\,,
    \end{equation}
    for any non-negative function $u$ belonging to $\mathcal{H}_{rad}(\R^{2N})$, compactly supported and satisfying $u(|x|):[0,\infty[\rightarrow\R$ is decreasing and $\Big\|\Frac{\nabla u}{|\,.\,|^{N-1}}\Big\|_{L^2(\R^{2N})}\leq 1\,.$ Besides, Inequality \eqref{Adachi eq} is sharp.
    \end{prop}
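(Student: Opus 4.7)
\textbf{Proof plan for Proposition~\ref{Adachi}.}
Fix $\gamma\in\,]0,\gamma_N[$ and a threshold $M>0$ to be chosen. Since $u$ is radial, non-negative and non-increasing, the super-level set $\{u>M\}$ is a (possibly empty) ball $B(r_M)$ with $u(r_M)=M$, and the plan is to split
\[
\Int_{\R^{2N}}(\mathrm{e}^{\gamma u^2}-1)\,dx \;=\; \Int_{\{u\leq M\}}(\mathrm{e}^{\gamma u^2}-1)\,dx \;+\; \Int_{B(r_M)}(\mathrm{e}^{\gamma u^2}-1)\,dx\,.
\]
On the subcritical part, the elementary bound $\mathrm{e}^{t}-1\leq t\,\mathrm{e}^{t}$ for $t\geq 0$ immediately controls the first integral by $\gamma\,\mathrm{e}^{\gamma M^2}\|u\|_{L^2(\R^{2N})}^2$.

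On the critical part $B(r_M)$, the key idea is to subtract the threshold so as to apply Proposition~\ref{ball d} at the sharp exponent $\gamma_N$. Setting $w:=(u-M)_+$, the continuity of the radial representative of $u$ away from the origin (as already exploited in the proof of Theorem~\ref{thm1}) together with the chain rule show that, after extension by zero outside $B(r_M)$, the function $w$ lies in $(\mathcal{H}_{rad}\cap H_0^1)(B(r_M))$ with $\|\nabla w/|\,.\,|^{N-1}\|_{L^2}\leq \|\nabla u/|\,.\,|^{N-1}\|_{L^2}\leq 1$. Choosing $\varepsilon:=(\gamma_N-\gamma)/\gamma>0$ so that $\gamma(1+\varepsilon)=\gamma_N$, Young's inequality $2Mw\leq \varepsilon w^2+M^2/\varepsilon$ yields
\[
\gamma u^2 \;=\; \gamma(M+w)^2 \;\leq\; \frac{\gamma\gamma_N M^2}{\gamma_N-\gamma} \;+\; \gamma_N w^2\,,
\]
whence Proposition~\ref{ball d} gives $\int_{B(r_M)}\mathrm{e}^{\gamma u^2}\,dx\leq C_N\,\mathrm{e}^{\gamma\gamma_N M^2/(\gamma_N-\gamma)}\,r_M^{2N}$. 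The loop is then closed by the trivial monotonicity estimate $\|u\|_{L^2(\R^{2N})}^2\geq \int_{B(r_M)}u^2\,dx \geq (\pi^N M^2/N!)\,r_M^{2N}$, which bounds $r_M^{2N}$ by $\|u\|_{L^2}^2$. Fixing, say, $M=1$ combines both pieces into \eqref{Adachi eq}.

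For the sharpness of the exponent $\gamma_N$ in \eqref{Adachi eq}, the plan is to test on the Moser--Lions sequence $(f_k)$ from \eqref{fk}, which fulfils all hypotheses of the proposition (it is radial, non-negative, decreasing, supported in $B(1)$, with $\|\nabla f_k/|\,.\,|^{N-1}\|_{L^2}=1$ by \eqref{formula}). A direct change of variable $s=-\log|x|/k$ yields $\|f_k\|_{L^2(\R^{2N})}^2=O(1/k)\to 0$, whereas restricting to the plateau $\{|x|<\mathrm{e}^{-k}\}$ on which $f_k\equiv\sqrt{2Nk/\gamma_N}$ already forces $\int_{\R^{2N}}(\mathrm{e}^{\gamma_N f_k^2}-1)\,dx\geq (\pi^N/N!)(1-\mathrm{e}^{-2Nk})$; the ratio of the two sides blows up as $k\to\infty$, precluding any inequality of the form \eqref{Adachi eq} at $\gamma=\gamma_N$. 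The only genuinely delicate point in the whole argument is the verification that $(u-M)_+$ simultaneously inherits the vanishing trace on $\partial B(r_M)$ and the weighted-gradient bound, which is exactly what allows the critical Proposition~\ref{ball d} to be applied with $\gamma_N$ fully absorbed in front of $w^2$.
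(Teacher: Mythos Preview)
Your argument is correct and follows the same threshold-splitting strategy as the paper: bound the subcritical part $\{u\le M\}$ by an elementary exponential inequality against $\|u\|_{L^2}^2$, handle the critical part $\{u>M\}$ via the weighted-gradient constraint, and close the loop by observing that the $L^2$-norm controls the measure of the super-level ball. The sharpness argument via the Moser sequence $(f_k)$ is also the same.

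The only genuine difference is in how the critical ball is treated. The paper passes to the one-dimensional variable $t$ with $w(t)=\sqrt{\gamma_N}\,u(\mathrm{e}^{-t/2})$ and estimates $\int_{T_0}^\infty(\mathrm{e}^{\beta w^2}-1)\mathrm{e}^{-Nt}\,dt$ directly from the Cauchy--Schwarz bound $w(t)\le 1+\sqrt{N(t-T_0)}$, whereas you stay in $\R^{2N}$ and invoke Proposition~\ref{ball d} as a black box on $(u-M)_+$. Since Proposition~\ref{ball d} was itself obtained by the same change of variable reducing to the $2$D Moser inequality, your route is a clean repackaging rather than a new idea; it has the advantage of reusing an already-stated result, while the paper's direct computation keeps the section self-contained and makes the dependence of $C_{\gamma,N}$ on $\gamma$ explicit.
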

    Assume indeed for the time being that  the above proposition is true. Then, for any  fixed $\varepsilon>0$, there exists $C_{\varepsilon}>0$ such that
\begin{eqnarray*}
  \int_{\mathbb{R}^{2N}} \left({\rm e}^{(\gamma_N-\varepsilon)|f_{k}(x)|^2}-1\right) \,dx
 \leq C_{\varepsilon,N}\|f_k\|_{L^2(\R^{2N})}^2\,,\end{eqnarray*}
which  leads to the desired result, by virtue of  the convergence of $(f_k)$ to zero in $L^2(\R^{2N})$.

  \medskip To end the proof of Proposition  \ref{f_alpha}, it remains to establish Proposition \ref{Adachi} the
proof of which is inspired from the one of Theorem 0.1 in \cite{AT}.
\begin{proof}
     Let $u$ satisfying the assumptions of Proposition \ref{Adachi}. Then there exists a function $v:\R_+\to \R_+$ such that
    $$u(x)=v(r)\,,\quad |x|=r\,,$$
    $$v'(r)\leq 0\,,\quad \forall\, r\geq 0\,,\; \mbox{and}$$
    $$\exists\,r_0>0\; \mbox{such that}\;v(r)=0\,\quad \forall \, r\geq r_0\,.$$
    \medbreak
\noindent    Setting $w(t)=\sqrt{\gamma_N}\,v\big({\rm e}^{-\frac{t}{2}}\big)\,,$ we can notice that $w$ satisfies the following properties:
    \begin{equation}\label{w1}
    w(t)\geq 0\,,\quad\forall\, t\in\R\,,
    \end{equation}
    \begin{equation}\label{w2}
    w'(t)\geq 0\,,\quad\forall\, t\in\R\,,\;\mbox{and}
    \end{equation}
    \begin{equation}\label{w3}
    \exists\, t_0\in \R\;\mbox{such that}\;w(t)=0\quad \forall\,t\leq t_0\,.
    \end{equation}
    \medbreak
\noindent    Besides, we obtain by straightforward computations that
    \begin{equation}\label{w4}
    \|w'\|_{L^2(\R)}=\sqrt{N}\,\Big\|\Frac{\nabla u}{|\,.\,|^{N-1}}\Big\|_{L^2(\R^{2N})}\leq \sqrt{N}\,\virgp
    \end{equation}
    \begin{equation}\label{w5}
    \Int_{\R}|w(t)|^2\,{\rm e}^{-Nt}\,dt=4N\|u\|_{L^2(\R^{2N})}^2\,,\;\mbox{and}
    \end{equation}
    \begin{equation}\label{w6}
    \Int_{\R}\Big({\rm e}^{\frac{\gamma}{\gamma_N}|w(t)|^2}-1\Big)\,{\rm e}^{-Nt}\,dt=\Frac{(N-1)!}{\pi^N}\Int_{\R^{2N}}\Big({\rm e}^{\gamma|u(x)|^2}-1\Big)\,dx\,.
    \end{equation}
    Thus to prove \eqref{Adachi eq}, it suffices to show that for any $\beta$ belonging to $]0,1[$, there exists a positive constant $C_{\beta}$ such that
    \begin{equation}\label{but}
    \Int_{\R}\Big({\rm e}^{\beta|w(t)|^2}-1\Big)\,{\rm e}^{-Nt}\,dt\leq C_{\beta}\Int_{\R}|w(t)|^2\,{\rm e}^{-Nt}\,dt\,,
    \end{equation}
    where $w$ satisfies \eqref{w1}, \eqref{w2}, \eqref{w3} and \eqref{w4}. For that purpose, let us set
    $$T_0:=\sup\{t\in\R;\;w(t)\leq 1\}\in\,]-\infty,+\infty]$$
    and write
     $$\Int_{\R}\Big({\rm e}^{\beta|w(t)|^2}-1\Big)\,{\rm e}^{-Nt}\,dt=I_1+I_2\,,$$
     where
      $$I_1:=\Int_{-\infty}^{T_0}\Big({\rm e}^{\beta|w(t)|^2}-1\Big)\,{\rm e}^{-Nt}\,dt\quad\mbox{ and}\quad I_2:= \Int_{T_0}^{+\infty}\Big({\rm e}^{\beta|w(t)|^2}-1\Big)\,{\rm e}^{-Nt}\,dt\,.$$
      In order to estimate $I_1$, let us notice that for any $t\leq T_0$, $w(t)$ belongs to $[0,1]$.
      Using the fact that there exists a positive constant $M$ such that
      $${\rm e}^x-1\leq Mx\,,\quad \forall \,x\in[0,1]\,,$$
      we deduce that
      $$I_1\leq M\,\beta\Int_{-\infty}^{T_0}|w(t)|^2{\rm e}^{-Nt}\,dt\,.$$
       Let us now estimate $I_2$. By virtue of Cauchy-Schwarz inequality, we get for any $t\geq T_0$
      \begin{eqnarray*}
      w(t)&=&w(T_0)+\Int_{T_0}^tw'(\tau)\,d\tau\\
      &\leq& 1+\sqrt{t-T_0}\,\|w'\|_{L^2(\R)}\,.
      \end{eqnarray*}
      This implies, in view of \eqref{w4}, that
      $$w(t)\leq 1+\sqrt{(t-T_0)N}\,.$$
      In addition, using the fact that for any $\varepsilon>0$ there exists $C_\varepsilon>0$ such that
      $$1+\sqrt{s}\leq \sqrt{(1+\varepsilon)s+C_\varepsilon}\,,$$
      we deduce that for any $t\geq T_0$
      $$w(t)^2\leq (1+\varepsilon )(t-T_0)N+C_\varepsilon\,.$$
      As $\beta\in]0,1[$, we can choose $\varepsilon$ such that $\beta (1+\varepsilon)-1<0\,.$
       Hence,
      \begin{eqnarray*}
      I_2&\leq& \Int_{T_0}^{+\infty} {\rm e}^{\beta (1+\varepsilon)(t-T_0)N+\beta C_\varepsilon-Nt}\,dt\\
      &\leq&{\rm e}^{\beta C_\varepsilon-NT_0}\Int_{T_0}^{+\infty} {\rm e}^{(t-T_0)N[\beta (1+\varepsilon)-1]}\,dt\\
      &\leq& \Frac{{\rm e}^{\beta C_\varepsilon-NT_0}}{N[1-\beta (1+\varepsilon)]}\,\cdot
      \end{eqnarray*}
      Since $\Int_{T_0}^{+\infty}|w(t)|^2\,{\rm e}^{-Nt}\,dt\geq \Int_{T_0}^{+\infty}{\rm e}^{-Nt}\,dt=\Frac{{\rm e}^{-NT_0}}{N}\,\virgp$ we infer that
      $$I_2\leq \Frac{{\rm e}^{\beta C_\varepsilon}}{1-\beta (1+\varepsilon)}\Int_{T_0}^{+\infty}|w(t)|^2\,{\rm e}^{-Nt}\,dt\,.$$
      Now, setting $C_\beta=\max\Big\{M\beta, \Frac{{\rm e}^{\beta C_\varepsilon}}{1-\beta (1+\varepsilon)}\Big\}$, we get \eqref{but}. This ends the proof of Inequality \eqref{Adachi eq}.

             \medbreak
       \noindent Finally, note that the example by Moser $f_k$ defined by \eqref{fk} illustrates the sharpness of Inequality \eqref{Adachi eq}, since $\|f_k\|_{L^2(\R^{2N})} \stackrel{k\to\infty}\longrightarrow  0$ and     $$ \Int_{\R^{2N}}\Big({\rm e}^{\gamma_N|f_k(x)|^2}-1\Big)\, dx\geq \Int_{|x|<{\rm e}^{-k}}\Big({\rm e}^{\gamma_N|f_k(x)|^2}-1\Big)\, dx=\Frac{\pi^N}{N!}\big(1-{\rm e}^{-2Nk}\big)\underset{k\rightarrow\infty}\longrightarrow\Frac{\pi^N}{N!}\,\cdot$$

      \end{proof}

\noindent{\bf Acknowledgments.} We are very grateful  to Professor Hajer Bahouri and Professor Mohamed Majdoub for  interesting discussions and careful reading of the manuscript. A part of this work was done at {\sf LAMA}-{\sf Universit\'e Paris-Est Cr\'eteil} whose hospitality and support we gratefully acknowledge.


\begin{thebibliography}{10}

\bibitem{AT}
{S. Adachi and K. Tanaka}, {\em Trudinger type inequalities in
$\mathbb R^N$ and their best exponents}, {\it Proceedings of the American Mathematical Society}, {\bf 128}, 2051--2057, (1999).

\bibitem{Adams} {D. R. Adams}, {\em A sharp inequality of J. Moser for higher order derivates}, {\it Annals of Mathematics}, {\bf 128}, 385--398, (1988).

\bibitem{Bahouri new1} {H. Bahouri}, {\em Structure theorems for 2D linear and nonliner Schr\"odinger equations}, to appear in {\it Communications in Contemporary Mathematics}
\bibitem{Bahouri new2} {H. Bahouri}, {\em Structure theorems for 2D linear and nonliner Schr\"odinger equations}, to appear in {\it Notes aux Comptes-Rendus de l'Acad\'emie des Sciences de Paris}.

 \bibitem{BCG} {H. Bahouri, J.-Y. Chemin and I. Gallagher}, {\em Refined Hardy inequalities }, {\it Annali della Scuola Normale di Pisa}, {\bf 5}, 375--391, (2006).

\bibitem{BC}{H. Bahouri and A. Cohen}, {\em Refined Sobolev inequalities in Lorentz spaces}, {\it
Journal of Fourier Analysis and Applications}, {\bf 17}, 662--673, (2011).



\bibitem{BIP}
{H.~Bahouri, S. Ibrahim and G.~Perelman}, {\em Scattering for the critical 2-D NLS with exponential growth}, {\it Journal of Differential and Integral Equations}, {\bf 27}, 233--268, (2014).


\bibitem{Bahouri}{H. Bahouri, M. Majdoub and N. Masmoudi},
 {\em On the lack of compactness in the 2D critical Sobolev embedding}, {\it Journal of
Functional Analysis}, {\bf 260}, 208--252, (2011).



\bibitem{Bahouri1'}{H. Bahouri, M. Majdoub and N. Masmoudi}, {\em Lack of compactness in the 2D critical Sobolev embedding, the general case}, {\it Journal de Math\'ematiques Pures et Appliqu\'ees}, {\bf 101}, 415--457, (2014).

 \bibitem{BP}
{H.~Bahouri and G.~Perelman}, {\em A Fourier approach to the profile decomposition in Orlicz spaces}, {\it Mathematical Research Letters}, {\bf 21},  33--54, (2014).


    \bibitem{IK}{I. Ben Ayed and M. K. Zghal}, {\em Characterization of the lack of compactness of $H^2_{rad}(\R^4)$ into the Orlicz space}, {\it Communications in Contemporary Mathematics}, {\bf 16}, (2014).
 \bibitem{IK1}{I. Ben Ayed and M. K. Zghal}, {\em Description of the lack of compactness in Orlicz spaces and applications}, to appear in {\it Journal of Differential and Integral Equations}.

\bibitem{Colliander}{J. Colliander, S. Ibrahim, M. Majdoub and N. Masmoudi}, {\em Energy critical NLS in two space dimensions}, {\it Journal of Hyperbolic Differential Equations}, {\bf 6}, no. 3, 549--575, (2009).


\bibitem{Hardy1} {G. H. Hardy}, {\em Note on a theorem of Hilbert}, {\it Mathematische Zeitschrift}, {\bf 6}, 314--317, (1920).
\bibitem{Hardy2} {G. H. Hardy}, {\em An inequality between integrals}, {\it Messenger of Mathematics}, {\bf 54}, 150--156, (1925).



\bibitem{Ibrahim1}{S. Ibrahim, M. Majdoub and N. Masmoudi}, {\em Global solutions for a semilinear, two-dimensional Klein-
Gordon equation with exponential-type nonlinearity}, {\it Communications on Pure and Applied Mathematics}, 1--20, (2006).

\bibitem{Ibrahim2}{S. Ibrahim, M. Majdoub, N. Masmoudi and K. Nakanishi}, {\em Scattering for the two-dimentional energy-
critical wave equation}, {\it Duke Mathematical Journal}, {\bf 150},  287--329, (2009).

 \bibitem{Lam} {N. Lam and G. Lu}, {\em Sharp Adams inequalities in Sobolev spaces $W^{m,\frac{n}{m}}(\R^n)$ for arbitrary integer $m$}, {\it Journal of Differential Equations}, {\bf 253}, no 4, 1143--1171, (2012).

\bibitem{Lions1}
P.L.~Lions,  The concentration-compactness principle in the
calculus of variations. The limit case. I., {\it Revista Matem\'atica Iberoamericana}, {\bf 1}, 145--201, (1985).


 \bibitem{Lions2} P.-L. Lions,  The concentration-compactness principle in the calculus of
              variations. The limit case. II., {\it Revista Matem\'atica Iberoamericana}, {\bf 1}, 45--121,  (1985).




\bibitem{Masmoudi}{N. Masmoudi and F. Sani}, {\em Adams' Inequality with the exact growth condition in $\R^4$}, {\it Communications on Pure Pure and Applied Mathematics}, 1307--1335, (2014).
\bibitem{Moser}{J. Moser}, {\em A sharp form of an inequality of N. Trudinger}, {\it Indiana University Mathematics Journal}, {\bf 20}, 1077--1092, (1971).

\bibitem{Poho}{S. I. Pohozaev}, {\em The Sobolev embedding in the case pl=n}, {\it Proceedings of the Technical Scientific Conference on Advances of Scientific Reseach 1964-1965, Mathematics Sections, Moscov. Energet. Inst., Moscow}, 158--170, (1965).

\bibitem{Rao}{M. M. Rao and Z. D. Ren}, {\em Applications of Orlicz spaces}, {\it Monographs and Textbooks in Pure and Applied Mathematics}, {\bf 250}, (2002).

\bibitem{Ruf}{B. Ruf}, {\em A sharp Trudinger-Moser type inequality for unbounded
domains in $\mathbb R\sp 2$}, {\it Journal of Functional Analysis}, {\bf 219}, 340--367, (2005).

\bibitem{Sani}{B. Ruf and F. Sani},
 {\em Sharp Adams-type inequalities in $\mathbb{R}^n$}, {\it Transactions of the American Mathematical Society}, {\bf 365}, 645--670, (2013).
 \bibitem{Sani1} {B. Ruf and F. Sani}, {\em Ground states for elliptic equations in $\R^2$ with exponential critical
growth}, {\it Geometric Propoerties for Parabolic and Elliptic PDE's}, {\bf 2}, 251--267, (2013).
\bibitem{Sani2}{F. Sani}, {\em A biharmonic equation in $\R^4$ involving nonlinearities with critical exponential growth}, {\it Communications on Pure and Applied Analysis}, {\bf 12}, 251--267, (2013).
\bibitem{Sani3}{F. Sani}, {\em A biharmonic equation in $\R^4$ involving nonlinearities with subcritical exponential growth}, {\it Advanced Nonlinear Studies}, {\bf 11},  889--904, (2011).
\bibitem{Trudinger}{N. S. Trudinger}, {\em On imbeddings into Orlicz spaces and some applications}, {\it Journal of Applied Mathematics and Mechanics}, {\bf 17}, 473--484, (1967).




\end{thebibliography}
\end{document}